\numberwithin{subfigure}{section}
\providecommand{\U}[1]{\protect\rule{.1in}{.1in}}
\providecommand{\U}[1]{\protect\rule{.1in}{.1in}}
\newcommand{\IR}{{\mathbb{R}}}
\newcommand{\IH}{{\mathbb{H}}}
\newcommand{\IB}{{\mathbb{B}}}
\newcommand{\BE}{\begin{equation}}
\newcommand{\EE}{\end{equation}}
\numberwithin{equation}{section}
\newtheorem{definition}{Definition}[section]
\newtheorem{theorem}{Theorem}[section]
\newtheorem{lemma}{Lemma}[section]
\newtheorem{example}{Example}[section]
\newtheorem{proposition}{Proposition}[section]
\newtheorem{corollary}{Corollary}[section]
\begin{document}

\begin{CJK*}{GBK}{}

\title{\textbf{On Some Basic Results Related to Affine Functions on Riemmanian Manifolds}}

\date{}
\author{Xiangmei Wang\thanks{College of Science, Guizhou University, Guiyang 550025, P. R. China
(sci.xmwang@gzu.edu.cn).} \and Chong Li\thanks{Department of
Mathematics, Zhejiang University, Hangzhou 310027, P. R. China
(cli@zju.edu.cn). This author was supported in part by the National
Natural Science Foundation of China (grant 11171300) and by Zhejiang
Provincial Natural Science Foundation of China (grant LY13A010011).} \and Jen-Chih Yao\thanks{Center for General
Education, Kaohsiung Medical University,  Kaohsiung 80702, Taiwan  (yaojc@kmu.edu.tw). Research of
this author was partially supported by the National Science Council
of Taiwan under grant NSC  99-2115-M-037-002-MY3.}}

\maketitle

{\noindent\textbf{Abstract.}} \textit{We study some basic properties of the function $f_0:M\rightarrow\IR$ on Hadamard manifolds  defined by
$$
f_0(x):=\langle u_0,\exp_{x_0}^{-1}x\rangle\quad\mbox{for any $x\in M$}.
$$
A characterization for the function to be linear affine is given and a counterexample on Poincar\'{e} plane is provided, which in particular, shows   that   assertions (i) and (ii) claimed in \cite[Proposition 3.4]{Papa2009} are  not true, and that   the function $f_0$ is indeed not quasi-convex. Furthermore, we discuss the convexity properties of the sub-level sets of the function on Riemannian manifolds with constant sectional curvatures.}

\bigskip
{\noindent\textbf{Keywords.}} \textit{Riemannian manifold; Hadamard
manifold; sectional curvature; convex function; quasiconvex
function; linear affine function}

\section{Introduction}\ \
Let $M$ be a Hadamard manifold and let  $x\in M$. Let $T_{x}M$ stand for the tangent space at $x$ to $M$ with the Riemannian scalar product denoted by $\langle \cdot,\cdot\rangle_x$ and let $TM:=\cup_{x\in M}T_xM$. We use $\exp_{x}$ and $P_{x,x_0}$, where $x_0\in M$,  to denote the exponential map of $M$ at $x$ and the
parallel transport from $x_0$ to $x$ (along the unique geodesic joining $x_0$ to $x$), respectively.  Now fix $x_0\in M$ and $u_0\in T_{x_0}M\setminus\{0\}$. Consider the
 vector field $X_0:M\to TM$ and the function $f_0:M\rightarrow\IR$ defined by
\begin{equation}\label{vector-field}
X_0(x):=P_{x,x_0}u_0\quad\quad\mbox{for any $x\in M$}
\end{equation}
and
\begin{equation}\label{form}
f_0(x):=\langle u_0,\exp_{x_0}^{-1}x\rangle\quad\mbox{for any $x\in M$},
\end{equation}
respectively.
Let ${\rm grad}f_0$ denote the gradient of $f_0$. Assertions {\bf(a)} and  {\bf(b)} below  were given in \cite[Proposition 3.4]{Papa2009} (without the  proof for {\bf(b)}).

{\bf(a)} ${\rm grad}f_0=X_0$.

{\bf(b)} $f_0$ is   linear affine   on $M$.\\
Recently, assertions  {\bf(a)} and  {\bf(b)} have been used in  \cite{  Papa2009,Papa2012}
     to study  the proximal point algorithm for quasiconvex/convex functions with Bregman distances on Hadamard manifolds; while assertion {\bf(b)}
     was also used in \cite{Colao2012,  ZhouH2013} to establish some existence results of solutions  for Equilibrium problems and vector optimization problems on Hadamard manifolds, respectively.
  However, assertion {\bf(b)} is clearly not true in general because, by \cite[p. 299, Theorem 2.1]{Udriste1994}),
     any twice differentiable linear affine function on Poincar\'{e} plane $\IH$ (a two dimensional Hadamard manifold of constant curvature $-1$)  is constant.
     Indeed, it has been further shown in \cite[Theorem 2.1]{KLLN2015} that
       assertion (b) is true for any $x_0\in M$ and  $u_0\in T_{x_0}M$ if and only if  $M$ is isometric to the  Euclidean space $\IR^n$. Furthermore, one can easily check that the function $f_0$ defined by \eqref{form} is even not convex, in general, because, otherwise, one has that both $f_0$ and $-f_0$ are convex (and so linear affine). This motivates us to consider the following problem:

{\bf Problem 1} Is the  function $f_0$ defined by \eqref{form}   quasi-convex?

Let $\nabla$ denote the Riemannian connection on $M$ and let $\mathcal{X}(M)$ denote all $C^\infty$ vector field on $M$.
Recall from \cite[P.83]{Udriste1994} that a smooth function  $f:M\rightarrow\IR$ is linear affine if and only if
$$\nabla_X{\rm grad}\,f=0\quad\mbox{for any $X\in\mathcal{X}(M)$}.$$
Specializing in the function $f_0$ defined by \eqref{form}, one is motivated to consider the following problems:

{\bf Problem 2} Is assertion {\bf(a)} true?

{\bf Problem 3} Does the vector field $X_0$ defined by \eqref{vector-field} satisfy
\begin{equation}\label{H-eq-0}
\nabla_XX_0=0\quad\mbox{for any $XY\in\mathcal{X}(M)$}?
\end{equation}

The first purpose of this   paper is to present a characterization in Hadamard manifolds for {\bf(b)} to be true in terms of assertion {\bf(a)} and the parallel transports, and to provide a counterexample on Poincar\'{e} plane to illustrate that the answer to each of Problems 1-3 is negative.
In particular for Problem 2, we   show that the vector field $X_0$ defined by  \eqref{vector-field} is even not a gradient field.
%

Our second purpose in the present paper is, in  spirit of the negative answer to Problem 1, to study   the convexity issue of  sub-level sets of the function $f_0$ defined by \eqref{form} in Riemannian manifolds with constant sectional curvatures. Our main results provide the exact estimate of the constant $c$ such that the sub-level set $L_{c,f_0}:=\{x\in M:f_0(x)\le c\}$ is strongly convex, which in particular improves and extends  the corresponding result in \cite[Corollary 3.1]{Ferreira2005}.

The paper is organized as follows. We review, in Section 2, some
basic notions, notations and some classical results of Riemannian
geometry that will be needed afterward.  The characterization in Hadamard manifolds for {\bf(b)} to be true  and the counterexample on Poincar\'{e} plane  are presented in Section 3. Finally, in Section 4,
 the convexity properties of the sub-level sets of the functions defined by \eqref{form} in Riemannian manifolds with constant sectional curvatures are discussed.

\section{Notations, notions and preliminaries}
\label{sec:2}
In present section, we present some basic notations, definitions and
properties of Riemannian manifolds. The readers are referred to some textbooks for details, for example, \cite{Carmo1992,Sakai1996,Udriste1994}.

Let $M$ be a connected $n$-dimensional Riemannian manifold with the Levi-Civita connection $\nabla$ on $M$. We denote the
tangent space at $x\in M$ by $T_{x}M$ and Let $\mathcal{X}(M)$ denote all ($C^\infty$) vector fields on $M$. By
$\langle\cdot,\cdot\rangle_{x}$ and $\|\cdot\|_x$ we mean the
corresponding Riemannian scalar product and the norm, respectively
(where the subscript $x$ is sometimes omitted). For $x,y\in{M}$, let
$\gamma:[0,1]\rightarrow M$ be a piecewise smooth curve joining $x$
to $y$. Then, the arc-length of $\gamma$ is defined by
$l(\gamma):=\int_{0}^{1}\|\dot{\gamma}(t)\|{\rm d}t$, while the Riemannian
distance from $x$ to $y$ is defined by ${\rm
d}(x,y):=\inf_{\gamma}l(\gamma)$, where the infimum is taken over
all piecewise smooth curves $\gamma:[0,1]\rightarrow M$ joining $x$
to $y$.  We use $\IB(x,r)$ to
denote the open metric ball at $x$ with radius
$r$, that is,
$$\IB(x,r):=\{y\in M:{\rm d}(x, y)<r\}.$$

For a smooth curve $\gamma$,
if $\dot{\gamma}$ is parallel along itself, then  $\gamma$ is called a
geodesic, that is, a smooth curve $\gamma$ is a geodesic if an only
if $\nabla_{\dot{\gamma}}{\dot{\gamma}}=0$. A geodesic
$\gamma:[0,1]\rightarrow M$ joining $x$ to $y$ is minimal if its
arc-length equals its Riemannian distance between $x$ and $y$. By
the Hopf-Rinow theorem \cite{Carmo1992}, $(M,{\rm d})$ is a complete
metric space, and there is at least one minimal geodesic joining $x$
to $y$. The set of
all geodesics $\gamma:[0,1]\rightarrow M$ with $\gamma(0)=x$ and
$\gamma(1)=y$ is denoted by   $\Gamma_{xy}$, that is
$$\Gamma_{xy}:=\{\gamma:[0,1]\rightarrow M:\;\gamma(0)=x,\, \gamma(1)=y\mbox{ and } \nabla_{\dot{\gamma}}\dot{\gamma}=0\}.
$$

Let $\gamma$ be a geodesic. We use $P_{\gamma,\cdot,\cdot}$ to denote the parallel transport on the tangent
bundle $TM$ (defined below) along $\gamma$ with respect to $\nabla$, which is defined by
\begin{equation}\label{def-parallel-trans}
P_{\gamma,\gamma(b),\gamma(a)}v=X(\gamma(b))\quad\mbox{for all } a,b\in\IR\mbox{ and }v\in T_{\gamma(a)}M,
\end{equation}
where $X$ is the unique vector field satisfying
\begin{equation}\label{para}
X(\gamma(a))=v\quad \mbox{and}\quad \nabla_{\dot{\gamma}}X=0.
\end{equation}
Then, for any $a,b\in \IR$, $P_{\gamma,\gamma(b),\gamma(a)}$ is an isometry from $T_{\gamma(a)}M$ to $T_{\gamma(b)}M$. We will
write $P_{y,x}$ instead of $P_{\gamma,y,x}$ in the case when $\gamma$ is a minimal geodesic joining $x$ to $y$
and no confusion arises.

The exponential map of $M$ at $x\in M$ is
denoted by $\exp_x(\cdot):T_xM\rightarrow M$. For a $C^\infty$ function $f:M\rightarrow\IR$, ${\rm grad} f$ and ${\rm Hess} f$ denote its gradient vector and Hessian, respectively. Let $X,Y\in\mathcal{X}(M)$. The Riemannian connection has the expression in terms of parallel transportation, that is,
\begin{equation}\label{covariant-D}
(\nabla_XY)(x)=\lim_{t\rightarrow0}\frac{1}{t}\{P_{\gamma,\gamma(t),\gamma(0)}Y(\gamma(t))-Y(x)\}\quad\mbox{for any $x\in M$},
\end{equation}
where the curve $\gamma$ with $\gamma(0)=x$ and $\dot{\gamma}(0)=X(x)$ (see, e.g., \cite[p. 29 Exercise 5]{Sakai1996}).

A complete simply connected Riemannian manifold of non-positive
sectional curvature is called a {\sl{Hadamard manifold}}. The
following propositions are well-known about the Hadamard manifolds, see, e.g, \cite[p. 221]{Sakai1996}.

\begin{proposition}\label{had} Suppose that $M$ is a Hadamard manifold. Let $p\in M$. Then,
$\exp_p:T_pM\rightarrow M$ is a diffeomorphism, and for any two
points $p,q\in M$ there exists a unique normal geodesic joining $p$
to $q$, which is in fact a minimal geodesic.
\end{proposition}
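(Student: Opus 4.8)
The plan is to prove this classical Cartan--Hadamard theorem in three stages: first show that $\exp_p$ is defined on all of $T_pM$ and is a local diffeomorphism, then upgrade this to a covering map, and finally invoke simple connectivity to conclude it is a global diffeomorphism. Since $M$ is complete, the Hopf--Rinow theorem (already invoked above) guarantees that every geodesic extends to all of $\IR$, so $\exp_p$ is defined on the whole of $T_pM$; moreover every $q\in M$ is joined to $p$ by a minimizing geodesic, so $\exp_p$ is surjective.

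The decisive step is to show that $d(\exp_p)_v$ is nonsingular for every $v\in T_pM$, equivalently that $M$ has no conjugate points. To this end I would fix a geodesic $\gamma(t)=\exp_p(tv)$ and a Jacobi field $J$ along $\gamma$ with $J(0)=0$ and $J\not\equiv0$, and examine $g(t):=\|J(t)\|^2$. Differentiating twice along $\gamma$ and using the Jacobi equation $\tfrac{D^2J}{dt^2}=-R(J,\dot\gamma)\dot\gamma$ together with the nonpositivity of the sectional curvature gives $g''(t)=2\|J'(t)\|^2-2\langle R(J,\dot\gamma)\dot\gamma,J\rangle\ge0$, so $g$ is convex. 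Since $J(0)=0$ and $J'(0)\neq0$, one has $g(0)=g'(0)=0$ and $g''(0)=2\|J'(0)\|^2>0$, so $g'$ is nondecreasing and strictly positive for small $t>0$, whence $g$ is strictly increasing on $(0,\infty)$ and never vanishes again. Thus no nontrivial Jacobi field vanishing at $t=0$ vanishes elsewhere, there are no conjugate points, and by the standard correspondence between critical points of the exponential map and conjugate points, $\exp_p$ is a local diffeomorphism.

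Next I would promote this to a covering map. Pulling back the Riemannian metric of $M$ along $\exp_p$ turns $T_pM$ into a Riemannian manifold for which $\exp_p$ is a local isometry, and the rays $t\mapsto tv$ become geodesics through the origin defined for all $t\in\IR$, so the pulled-back metric is complete. Invoking the classical fact that a local isometry from a complete Riemannian manifold onto a connected one is a covering map, I conclude that $\exp_p:T_pM\to M$ is a covering map. Since $M$ is simply connected and $T_pM$ is connected, this covering is single-sheeted, hence a bijection, and therefore a diffeomorphism. Injectivity then yields, for each $q\in M$, a unique $v\in T_pM$ with $\exp_pv=q$ and thus a unique (normalized) geodesic $t\mapsto\exp_p(tv)$ joining $p$ to $q$; combined with the Hopf--Rinow existence of a minimizing geodesic, this unique geodesic is the minimal one.

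The main obstacle I anticipate is the covering-map step: the no-conjugate-point conclusion by itself gives only a local diffeomorphism, whereas global injectivity genuinely requires the interplay of completeness with simple connectivity, so the care lies in verifying the completeness of the pulled-back metric and the geodesic path-lifting property that underlie the local-isometry-implies-covering result.
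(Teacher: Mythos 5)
Your proof is correct, but there is nothing in the paper to compare it against: the paper gives no argument for this proposition at all, stating it as a well-known fact (it is the Cartan--Hadamard theorem) with a citation to Sakai's textbook. What you have reconstructed is essentially the classical proof contained in such references: Hopf--Rinow for global definition and surjectivity of $\exp_p$; convexity of $g(t)=\|J(t)\|^{2}$ under nonpositive curvature to rule out conjugate points, hence $d(\exp_p)_v$ is everywhere nonsingular and $\exp_p$ is a local diffeomorphism; the pull-back metric, completeness, and the local-isometry-implies-covering lemma; and finally simple connectivity to make the covering trivial, with uniqueness of the geodesic from $p$ to $q$ falling out of injectivity of $\exp_p$. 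One point worth making explicit at the step you yourself flag as delicate: completeness of the pulled-back metric on $T_pM$ does not require examining all of its geodesics. The geodesics through the origin $0$ are precisely the rays $t\mapsto tv$ (they map to geodesics of $M$ under the local isometry $\exp_p$, and ODE uniqueness gives the converse), and these are defined for all $t\in\IR$; by the full Hopf--Rinow theorem, geodesic completeness at a single point already implies completeness of the whole (connected) manifold. With that observation the covering-map step closes, and your argument is a sound, self-contained substitute for the paper's citation; the two standard facts you invoke without proof (the identification of critical points of $\exp_p$ with conjugate points via Jacobi fields, and the covering lemma for local isometries from complete manifolds) are exactly the ingredients any textbook treatment also takes as given.
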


The following definition presents the notions of different convexities, where item (a) and (b) are known in \cite{Cheeger1972}; see also
\cite{LiLi2009,Walter1974,Wang2010}.

\begin{definition}\label{convexset}
Let $Q$ be a nonempty subset of the Riemannian manifold $M$. Then, $Q$ is said to be

{\rm (a)} weakly convex if, for any $x,y\in Q$, there is a minimal geodesic of $M$ joining $x$ to $y$
and it is in $Q$;

{\rm (b)} strongly convex if, for any $x,y\in Q$, there is just one
minimal geodesic of $M$ joining $x$ to $y$
and it is in $Q$.
\end{definition}

All convexities in a Hadamard manifold coincide and are simply called the convexity. Let $f:M\rightarrow\overline {\IR}$ be a proper function, and let
${\rm dom}f$ denote its domain, that is, ${\rm dom}f:=\{x\in
M:f(x)\neq\infty\}$. We use $\Gamma^f_{xy}$ to denote the set of all
$\gamma\in\Gamma_{xy}$ such that $\gamma\subseteq{\rm dom}f$. In the following definition, item (a) is known in \cite{LiMWY2011,LiY2012} and item (b) is an extension of the one in \cite[p. 59]{Udriste1994}, which is introduced for the case when ${\rm dom}f$ is totally convex.

\begin{definition}\label{convexfunction}
Let $f:M\rightarrow\overline {\IR}$ be a proper function and suppose that ${\rm
dom}f$ is weakly convex.
Then, $f$ is said to be

{\rm (a)} convex if
$$f\circ\gamma(t)\le(1-t)f(x)+tf(y)\quad\mbox{for all }x,y\in {\rm
dom}f,\;\gamma\in\Gamma_{xy}^f,\;t\in[0,1];$$

{\rm (b)} quasi-convex if
$$f\circ\gamma(t)\le\max\{f(x),f(y)\}\quad\mbox{for all }x,y\in {\rm
dom}f,\;\gamma\in\Gamma_{xy}^f,\;t\in[0,1].$$
\end{definition}

Clearly, for a proper function $f$ with a weakly convex domain, the convexity implies the
quasi-convexity. Fixing $c\in\IR$,  we use  $L_{c,f}$  to denote the sub-level set of $f$ defined by
$$L_{c,f}:=\{x\in M:f(x)\le c\}.$$
The following proposition describe the relationship between the  convexities  of a function $f$ and its sub-level sets.
\begin{proposition}\label{remark}
Let $f:M\rightarrow\overline {\IR}$ be a proper function with weakly convex   domain ${\rm
dom}f$. Then, $f$ is quasi-convex if and only if, for each $c\in \IR$,  the sub-level set $L_{c,f}$ is totally convex with restricted to ${\rm dom}f$ in the sense that
  for any $x,y\in L_{c,f}$,    if  $\gamma \in \Gamma^f_{xy}$ then $\gamma\subseteq L_{c,f}$. In particular,   $f$ is quasi-convex if and only if $L_{c,f}$ is strongly convex  for each $c\in \IR$ in the case when ${\rm
dom}f$ is strongly convex.
\end{proposition}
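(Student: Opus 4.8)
The plan is to reduce the whole statement to the elementary correspondence between quasi-convexity of $f$ and convexity of its sub-level sets, obtained by taking the threshold $c$ to be $\max\{f(x),f(y)\}$ along a given geodesic. Two bookkeeping facts are used throughout: first, $L_{c,f}\subseteq{\rm dom}f$, since $f(x)\le c<\infty$ forces $x\in{\rm dom}f$, so any two points of a sub-level set are admissible points of $f$; second, for $x,y\in{\rm dom}f$ the number $\max\{f(x),f(y)\}$ is a finite real, as $f$ is proper. These let me freely pass between thresholds and geodesics.

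For the first (main) equivalence I would argue both implications directly from Definition~\ref{convexfunction}(b). For the forward direction, assume $f$ is quasi-convex, fix $c\in\IR$, take $x,y\in L_{c,f}$ and $\gamma\in\Gamma^f_{xy}$; then $f(\gamma(t))\le\max\{f(x),f(y)\}\le c$ for every $t\in[0,1]$, so $\gamma\subseteq L_{c,f}$, which is exactly the asserted total convexity of $L_{c,f}$ relative to ${\rm dom}f$. For the converse, take arbitrary $x,y\in{\rm dom}f$ and $\gamma\in\Gamma^f_{xy}$ and set $c:=\max\{f(x),f(y)\}\in\IR$; then $x,y\in L_{c,f}$, and applying the hypothesis at this particular $c$ gives $\gamma\subseteq L_{c,f}$, i.e. $f(\gamma(t))\le c=\max\{f(x),f(y)\}$ for all $t$, which is precisely quasi-convexity.

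For the ``in particular'' statement I would combine this equivalence with the standing hypothesis that ${\rm dom}f$ is strongly convex. The implication from quasi-convexity to strong convexity of each $L_{c,f}$ is clean: given $x,y\in L_{c,f}\subseteq{\rm dom}f$, strong convexity of ${\rm dom}f$ supplies a unique minimal geodesic $\gamma$ of $M$ joining $x$ to $y$ with $\gamma\subseteq{\rm dom}f$, hence $\gamma\in\Gamma^f_{xy}$; the first part then forces $\gamma\subseteq L_{c,f}$. Since the uniqueness of the minimal geodesic of $M$ is a property of the pair $(x,y)$ inherited from the strong convexity of ${\rm dom}f$, this is exactly strong convexity of $L_{c,f}$ in the sense of Definition~\ref{convexset}(b).

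The reverse implication is where the genuine difficulty lies, and it is the step I expect to be the main obstacle. Assuming each $L_{c,f}$ is strongly convex, I want to recover the total-convexity condition of the first part, namely $\gamma\subseteq L_{c,f}$ for every $\gamma\in\Gamma^f_{xy}$ with $x,y\in L_{c,f}$. The tension is that strong convexity of $L_{c,f}$ only controls the \emph{unique minimal} geodesic between $x$ and $y$, whereas $\Gamma^f_{xy}$ a priori contains \emph{every} geodesic of $M$ lying in ${\rm dom}f$, minimal or not. So the crux is the claim that a strongly convex set contains no non-minimal geodesic joining two of its points: if $\gamma\subseteq{\rm dom}f$ were a non-minimal geodesic from $x$ to $y$, let $t_0\in(0,1)$ be its cut time, so $\gamma(t_0)$ is the cut point of $x$ along $\gamma$. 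When this cut point comes from a second minimizer, it produces two distinct minimal geodesics of $M$ joining $x$ and $\gamma(t_0)\in{\rm dom}f$, contradicting the uniqueness built into strong convexity; granting the claim, $\gamma$ is the unique minimal geodesic and strong convexity of $L_{c,f}$ gives $\gamma\subseteq L_{c,f}$, closing the argument through the first part. The delicate remaining sub-case is that in which $\gamma(t_0)$ is a conjugate point of $x$ without branching, where uniqueness is not immediately violated; here one must exploit the focusing of geodesics past a conjugate point to exhibit two minimal geodesics between nearby points of the strongly convex ${\rm dom}f$, again contradicting uniqueness. Reconciling the ``minimal-geodesic-only'' content of strong convexity with the ``all-geodesics'' content of quasi-convexity is thus the conceptual heart of the proof.
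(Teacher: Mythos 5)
Your proof of the main equivalence (quasi-convexity of $f$ versus total convexity of each $L_{c,f}$ restricted to ${\rm dom}f$) is correct and is essentially word-for-word the paper's own argument: one direction by bounding $f\circ\gamma$ by $\max\{f(x),f(y)\}\le c$, the other by specializing to the threshold $c=\max\{f(x),f(y)\}$. The paper stops there: it dismisses the ``in particular'' assertion as immediate and offers no argument, whereas you attempt to prove it and thereby expose a real subtlety. Your forward half (quasi-convex $\Rightarrow$ each $L_{c,f}$ strongly convex) is complete and correct: strong convexity of ${\rm dom}f$ puts the unique minimal geodesic of $M$ into $\Gamma^f_{xy}$, and the first part pushes it into $L_{c,f}$.

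The gap is in your reverse half, in exactly the sub-case you flag yourself. Your reduction is right: everything hinges on the claim that a strongly convex set in the sense of Definition~\ref{convexset}(b) contains no non-minimal geodesic joining two of its points, and the branch where the cut point $\gamma(t_0)$ carries a second minimizer is correctly handled, since both $x$ and $\gamma(t_0)$ lie in ${\rm dom}f$ and uniqueness is violated. But in the conjugate-point branch your proposed argument --- ``exploit the focusing of geodesics past a conjugate point to exhibit two minimal geodesics between nearby points of the strongly convex ${\rm dom}f$'' --- does not close. Focusing arguments (for instance, the classical fact that the cut locus of $x$ is the closure of the set of points admitting at least two minimizers from $x$) produce two-minimizer pairs in $M$ arbitrarily close to $\gamma(t_0)$, but nothing guarantees such pairs lie in ${\rm dom}f$: the definition of strong convexity used here constrains only pairs of points of the set itself, and ${\rm dom}f$ need not be open nor contain any neighborhood of $\gamma(t_0)$. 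To obtain a contradiction one would have to show that strong convexity \emph{forces} ${\rm dom}f$ to contain such a pair (say by an iterated convex-hull, or filling, argument around the conjugate point), and that is a genuinely nontrivial step, not a routine one. Note that the gap is invisible in every situation where the paper actually invokes the proposition: on a Hadamard manifold any two points are joined by a unique geodesic, so $\Gamma^f_{xy}$ reduces to the minimal one; and in Section 4, where ${\rm dom}f_0=\IB(x_0,\frac{D_\kappa}{2})$ in constant curvature $\kappa>0$, any geodesic contained in that ball has length less than $D_\kappa$ and is therefore automatically minimal. So your architecture is sound and suffices for the paper's applications, but as a proof of the proposition in its stated generality the conjugate-point sub-case remains open.
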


\begin{proof}
We only consider the case when ${\rm dom}f$ is weakly convex (otherwise when ${\rm dom}f$ is weakly convex, the result is immediate by definition).

Suppose that $f$ is quasi-convex. Take $c\in\IR$. Let $x,y\in L_{c,f}\subseteq{\rm dom}f$ and let $\gamma\in\Gamma_{xy}^f$ (i.e., $\gamma$ is a geodesic joining $x$ to $y$ which is contained in ${\rm dom}f$). Then, $f(x)\le c$ and $f(y)\le c$. Noting that $f$ is quasi-convex, it follows that
$$f\circ\gamma(t)\le\max\{f(x),f(y)\}\le c\quad\mbox{for all }t\in[0,1].$$
This implies that $\gamma\subseteq L_{c,f}$ and so $L_{c,f}$ is totally convex restricted to ${\rm dom}f$ since $x,y\in L_{c,f}$ and $\gamma\in\Gamma_{xy}^f$ are arbitrary.

Conversely, suppose that $L_{c,f}$ is totally convex restricted to ${\rm dom}f$ for each $c\in \IR$. Let $x,y\in {\rm dom}f$ and let $\gamma\in\Gamma_{xy}^f$.
Set $c_0:=\max\{f(x),f(y)\}$. Then, by assumption, $\gamma\subseteq L_{c_0,f}$, that is,
$$f\circ\gamma(t)\le c_0=\max\{f(x),f(y)\}\quad \mbox{for all $t\in[0,1]$}.$$
This implies that $f$ is quasi-convex since $x,y\in {\rm dom}f$ and $\gamma\in\Gamma_{xy}^f$ are arbitrary.
The proof is complete.
\end{proof}

\section{Linear affine functions and counterexample on Hadamard manifolds}
\label{sec:2}

For the whole  section, we assume that $M$ is a Hadamard manifold.
Consider a proper convex function $f:M\rightarrow\overline {\IR}$ on $M$. We define the subdifferential of $f$ at
$x\in{\rm dom}f$ by
$$\partial f(x):=\{v\in T_{x}M:\; f(y)\ge f(x)+\langle v,\gamma'(0)\rangle\;\;\mbox{ for all } y\in {\rm dom}f\mbox{ and }
\gamma\in\Gamma^f_{xy}\}.
$$
By \cite[p. 74]{Udriste1994} (see also \cite[Proposition 6.2]{LiY2012}), $\partial f(x)$ is a
 nonempty, compact and convex set for any $x\in {\rm int}({\rm dom}f)$, where  ${\rm int}Q$ denotes the topological interior of a subset $Q$ of $M$.
 Let $f:M\rightarrow\overline {\IR}$ be a proper function with   convex domain.
 Recall that $f$ is  linear affine if both $f$ and $-f$ are convex.  Furthermore, if $f$ is of $C^2$
 and ${\rm dom}f$ is open, its second covariant differetial ${\rm Hess}f$ is defined by
 $${\rm Hess}f(X,Y)=\langle\nabla_{X}{\rm grad}f,Y\rangle\quad\mbox{for any }X,Y\in\mathcal{X}(M).
 $$
Then $f$ is linear affine if and only if ${\rm Hess}f=0$ on ${\rm dom}f$; see \cite[P.83]{Udriste1994}.
The following theorem present, in particular,  a characterization in Hadamard manifolds for assertion
{\bf(b)} to be true in terms of assertion {\bf(a)} and the parallel transports.

\begin{theorem} \label{affine2}
Let $f:M\rightarrow\overline {\IR}$ be a proper function and suppose that ${\rm dom}f$ is a nonempty open convex subset.  
If function $f$ is linear affine, then, for any $x_0\in{\rm dom}f$, there exists  $u_0\in T_{x_0}M$ such that
\begin{equation}\label{2-picewise}
P_{x,x_0}u_0=P_{x,z}
\circ P_{z,x_0}u_0  \quad\mbox{for any  }  (z,x)\in {\rm dom}f\times {\rm dom}f,
\end{equation}
\begin{equation}\label{function-g}
{\rm grad} f(x)=P_{x,x_0}u_0\quad\mbox{for any } x\in {\rm dom}f
\end{equation}
and
\begin{equation}\label{function}
f(x)=f(x_0)+\langle u_0,\exp_{x_0}^{-1}x\rangle \quad\mbox{for any } x\in {\rm dom}f.
\end{equation}
Conversely, if there exist $x_0\in {\rm dom}f$ and   $u_0\in T_{x_0}M$ such that \eqref{2-picewise} and \eqref{function-g} hold, then
  $f$ is linear affine. 

\end{theorem}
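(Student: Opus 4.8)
The plan is to run both implications through the intrinsic criterion recalled above (see \cite[p.~83]{Udriste1994}): a $C^2$ function $g$ on an open set is linear affine if and only if ${\rm Hess}\,g=0$, equivalently $\nabla_X{\rm grad}\,g=0$ for every $X\in\mathcal{X}(M)$, i.e. ${\rm grad}\,g$ is a parallel vector field. For the forward implication, fix $x_0\in{\rm dom}f$. Since $f$ and $-f$ are both convex on the open convex set ${\rm dom}f$, the subdifferentials $\partial f(x_0)$ and $\partial(-f)(x_0)$ are nonempty; choose $v\in\partial f(x_0)$ and $w\in\partial(-f)(x_0)$. In a Hadamard manifold the geodesic from $x_0$ to any $y$ is the unique minimal one, with initial velocity $\exp_{x_0}^{-1}y$, so the two subgradient inequalities sandwich $f(y)$ between $f(x_0)+\langle v,\exp_{x_0}^{-1}y\rangle$ and $f(x_0)-\langle w,\exp_{x_0}^{-1}y\rangle$. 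This forces $\langle v+w,\exp_{x_0}^{-1}y\rangle\le 0$ for all $y\in{\rm dom}f$; as ${\rm dom}f$ is open, $\exp_{x_0}^{-1}y$ sweeps a neighborhood of $0$ in $T_{x_0}M$, so $v+w=0$ and the sandwich collapses to equality. Setting $u_0:=v={\rm grad}\,f(x_0)$ gives \eqref{function}, and since $\exp_{x_0}^{-1}$ is a diffeomorphism by Proposition~\ref{had}, \eqref{function} writes $f$ as a linear functional composed with a smooth map, whence $f\in C^\infty$.

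With smoothness in hand the Hessian criterion applies: $f$ linear affine yields $\nabla_X{\rm grad}\,f=0$, so ${\rm grad}\,f$ is parallel along every geodesic. Transporting along the minimal geodesic from $x_0$ to $x$ gives ${\rm grad}\,f(x)=P_{x,x_0}{\rm grad}\,f(x_0)=P_{x,x_0}u_0$, which is \eqref{function-g}. Running the same argument along the minimal geodesic from $z$ to $x$ gives ${\rm grad}\,f(x)=P_{x,z}{\rm grad}\,f(z)$; substituting \eqref{function-g} at both $z$ and $x$ turns this into $P_{x,x_0}u_0=P_{x,z}\circ P_{z,x_0}u_0$, which is \eqref{2-picewise}.

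For the converse, suppose \eqref{2-picewise} and \eqref{function-g} hold for some $x_0$ and $u_0$, and set $X_0(x):=P_{x,x_0}u_0$. Because $\exp_{x_0}$ is a diffeomorphism and parallel transport along the resulting geodesics depends smoothly on the endpoint, $X_0$ is a smooth field; by \eqref{function-g}, ${\rm grad}\,f=X_0$ is smooth, so $f\in C^\infty$. Now fix any geodesic $\gamma$ with $\gamma(0)=z$; by the uniqueness of geodesics in a Hadamard manifold each segment $\gamma|_{[0,t]}$ is the minimal geodesic from $z$ to $\gamma(t)$, so \eqref{2-picewise} reads $X_0(\gamma(t))=P_{\gamma,\gamma(t),\gamma(0)}X_0(z)$, i.e. $X_0\circ\gamma$ is the parallel field along $\gamma$ determined by its value at $0$. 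By the defining relation \eqref{para} of parallel transport this means $\nabla_{\dot\gamma}X_0=0$ along $\gamma$, and as every direction at every point arises from such a $\gamma$, we obtain $\nabla_XX_0=0$ for all $X\in\mathcal{X}(M)$. Hence $\nabla_X{\rm grad}\,f=0$, i.e. ${\rm Hess}\,f=0$, and $f$ is linear affine.

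The step I expect to be the main obstacle is the derivation of \eqref{function-g} in the forward direction: it must be extracted from the vanishing of the Hessian (parallelity of ${\rm grad}\,f$) and \emph{not} by differentiating the explicit formula \eqref{function}, precisely because assertion {\bf(a)} --- that ${\rm grad}\,f_0$ equals $X_0$ for an arbitrary $f_0$ of the form \eqref{form} --- is false in general; the two routes to \eqref{function-g} coincide only under the extra path-independence \eqref{2-picewise}, which is exactly what the linear-affine hypothesis forces. A secondary delicacy is the regularity bootstrap: convexity together with concavity yields only a priori differentiability, and $C^\infty$ smoothness is recovered a posteriori from \eqref{function} through the diffeomorphism $\exp_{x_0}^{-1}$, which is what legitimises invoking the $C^2$ Hessian criterion.
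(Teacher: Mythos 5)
Your proposal is correct and follows essentially the same route as the paper: the forward direction via the two subdifferential inequalities forcing $u_0'=-u_0$, smoothness from \eqref{function}, then the vanishing-Hessian criterion to make ${\rm grad}\,f$ parallel along geodesics (yielding \eqref{function-g} and \eqref{2-picewise}); and the converse by reading \eqref{2-picewise} and \eqref{function-g} as saying ${\rm grad}\,f$ is parallel along every geodesic, hence ${\rm Hess}\,f=0$. The only cosmetic difference is that in the converse the paper computes $\nabla_X{\rm grad}\,f=0$ via the limit-quotient formula \eqref{covariant-D}, while you invoke the defining property \eqref{para} of parallel fields --- equivalent characterizations of the same step.
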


\begin{proof} Assume that    $f$ is linear affine.  Then both $f$ and
$-f$ are convex.   Take $x_0\in{\rm dom}f$ and note that $ {\rm dom}f$ is open.  It follows that both $\partial f(x_0)$ and $\partial (-f(x_0))$
are nonempty. Thus one can chose $u_0\in\partial f(x_0)$ and
$u_0'\in\partial(-f(x_0))$, respectively. Then, by definition, we have
that, for any $x\in {\rm dom}f$, 
\begin{equation}\label{function1}
\begin{array}{ll}
 f(x)\ge f(x_0)+\langle u_0,\exp_{x_0}^{-1}x\rangle \quad\mbox{and}\quad
 -f(x)\ge -f(x_0)+\langle u_0',\exp_{x_0}^{-1}x\rangle ;
\end{array}
\end{equation}
hence $
\langle u_0+u_0',\exp_{x_0}^{-1}x\rangle_{x_0}\le0 $  for any $x\in {\rm dom}f$.
This implies that
$u_0+u_0'=0$, that is $u_0'=-u_0$  (as $ {\rm dom}f$  is open).
Thus \eqref{function} follows from \eqref{function1}. Furthermore, noting that  $f$ is of class $C^\infty$ by \eqref{function}, one then has that  ${\rm Hess}f=0$ on ${\rm dom}f$, that is, $$\mbox{${\rm Hess}f(X,Y)=\langle\nabla_X{\rm grad} f,Y\rangle=0$\;\; for any $X,Y\in \mathcal{X}({\rm dom}f)$}.$$
In particular, one has that
$$
 \nabla_{\dot{\gamma}_{xz}}{\rm grad} f =0 \quad\mbox{for any } x,z\in {\rm dom}f,$$
where  ${\gamma}_{xz}$ is the geodesic joining $x$ and $z$, which   lies in ${\rm dom}f$. This, together with the definition of parallel transport (e.g., \eqref{def-parallel-trans}), implies that
\begin{equation}\label{function40}
{\rm grad} f(x)=P_{x,z}{\rm grad} f(z)\quad\mbox{for any } x,z\in {\rm dom}f.
\end{equation}
Note further that, for any $u\in   T_{x_0}M$, one has
$$\langle{\rm grad}f(x_0),u\rangle_{x_0}=\frac{{\rm
d}}{{\rm d}t}f\circ\exp_{x_0}t u\mid_{t=0}=\langle u_0,u\rangle_{x_0}.$$
It follows that ${\rm grad}f(x_0)=u_0$. This, together with  \eqref{function40}, implies that  \eqref{2-picewise} and \eqref{function-g}  hold.


Now, suppose that \eqref{2-picewise} and \eqref{function-g} hold for some $x_0\in {\rm dom}f$ and $u_0\in T_{x_0}M$. Let $x\in {\rm dom}f$ and $X\in \mathcal{X}({\rm dom}f)$. Let $\gamma:[-\varepsilon,\varepsilon]\rightarrow {\rm dom}f$ be the geodesic contained in ${\rm dom}f$ with $\gamma(0)=x$ and $\dot{\gamma}(0)=X(x)$. Let $t\in[-\varepsilon,\varepsilon]$.
We see from \eqref{function-g} that
$${\rm grad} f(x)=P_{x,x_0}u_0,\;{\rm grad} f(\gamma(t))=P_{\gamma(t),x_0}u_0.$$
In light of \eqref{2-picewise}, it follows that $$P_{x,\gamma(t)}{\rm grad} f(\gamma(t))=P_{x,\gamma(t)}\circ P_{\gamma(t),x_0}u_0=P_{x,x_0}u_0={\rm grad} f(x).$$
Noting that $P_{x,\gamma(t)}=P_{\gamma,x,\gamma(t)}$, one gets by \eqref{covariant-D} that
$$(\nabla_X{\rm grad} f)(x)=\lim_{t\rightarrow0}\frac{1}{t}\{P_{x,\gamma(t)}{\rm grad} f(\gamma(t))-{\rm grad} f(x)\}=0.$$
Since $X\in \mathcal{X}({\rm dom}f)$ and $x\in {\rm dom}f$ are arbitrary, we conclude that ${\rm Hess}f=0$ on ${\rm dom}f$, and so $f$ is linear affine.
The proof is complete.
\end{proof}

The remainder of this section is to construct a counterexample on Poincar\'{e} plane to illustrate that  the answer to each of Problems 1-3 is negative.
To do this, let
$$M=\IH=:\{(t_1,t_2)\in\mathbb{R}^{2}|\ t_2>0\},$$
be the  {Poincar\'{e}
plane} endowed with the Riemannian metric, in terms of the natural coordinate system,   defined by
\begin{equation}\label{metric0}
g_{11}=g_{22}:=\frac{1}{t_2^2},\ g_{12}:=0\text{ for each }
(t_1,t_2)\in \IH.
\end{equation}

The sectional curvature of $\IH$ is equal to $-1$ (see, e.g., \cite[p. 160]{Carmo1992}), and the geodesics
on $\IH$ are the semilines $\gamma(a;\cdot):=(\gamma^1(a;\cdot),\gamma^2(a;\cdot))$ (through $(a,1)$), and the semicircles $\gamma(b,r;\cdot):=(\gamma^1(b,r;\cdot),\gamma^2(b,r;\cdot))$ with center at $(b,r)$ and radius $r$), which admit the following natural
parameterizations:
\begin{equation}\label{geodesic}
\left\{\begin{array}{l}
\gamma^1(a;s)=a\\
  \gamma^2(a;s)=e^{s}\\
  \end{array} \right.
\quad  \mbox{and} \quad
\left\{\begin{array}{l}
 \gamma^1(b,r;s)=b-r\tanh s\\
 \gamma^2(b,r;s)=\frac{r}{\cosh s}\\
\end{array} \right.
  \quad \mbox{for any }  s\in\IR,
 \end{equation}
  respectively;
see e.g., \cite[p. 298]{Udriste1994}.

By \cite[p. 297]{Udriste1994},
  the Riemannian connection $\nabla$ on $\IH$ (in terms of the natural coordinate system) has the components:
\begin{equation}\label{conenction-H}
\Gamma_{11}^1=\Gamma_{22}^1=\Gamma_{12}^2=\Gamma_{21}^2=0,\quad \Gamma_{12}^1=\Gamma_{21}^1=\Gamma_{22}^2=-\frac{1}{t_2}
\quad\mbox{and}\quad \Gamma_{11}^2=\frac{1}{t_2}.
\end{equation}
  Hence, noting the expression of the connection $\nabla$ given in \cite[p. 51]{Carmo1992},
one has the following formular for the connection $\nabla$ on $\IH$:
\begin{equation}\label{conenction-H-XY}
\nabla_YX=\left(Y^1\frac{\partial X^1}{\partial t_1}+Y^2\frac{\partial X^1}{\partial t_2}-\frac{1}{t_2}X^1Y^2-\frac{1}{t_2}X^2Y^1,Y^1\frac{\partial X^2}{\partial t_1}+Y^2\frac{\partial X^2}{\partial t_2}+\frac{1}{t_2}X^1Y^1-\frac{1}{t_2}X^2Y^2\right).
\end{equation}
for any $X:=(X^1,X^2),Y:=(Y^1,Y^2)\in\mathcal{X}(\IH)$, where and in sequel, for a differential function $\phi$ on $\IH$,  $\frac{\partial \phi}{\partial t_1}$ and  $\frac{\partial \phi}{\partial t_2}$  denote   the classical  partial derivatives of $\phi$ in $\IR^2$  with respect to the  first  variable  $t_1$ and the second variable $t_2$, respectively.
 Consider   a differentiable function
  $f:\IH\rightarrow\IR$. Then,    using \eqref{metric0}, one concludes that the gradient vector ${\rm grad} f$  and the differential ${\rm d}f$ of $f$ are respectively given by
\begin{equation}\label{gradient-H}
{\rm grad} f{(x)}=t_2^2\left(\frac{\partial f{(x)}}{\partial t_1}\frac{\partial}{\partial t_1}+\frac{\partial f{(x)}}{\partial t_1}\frac{\partial}{\partial t_2}\right)
\end{equation}
and
\begin{equation}\label{differential-H}
{\rm d}f(x)=\frac{\partial f(x)}{\partial t_1}{\rm d}t_1+\frac{\partial f(x)}{\partial t_2}{\rm d}t_2
\end{equation}
for any $x=(t_1,t_2)\in\IH$; see, e.g., \cite[p. 8]{Udriste1994}. .

For convenience, we also   need the expressions of the exponential map $\exp_{x}^{-1}y$ and the geodesic $\gamma_{xy}$ joining $x$ to $y$, which can be found in \cite{WLYJNCA2014}. To this end, let $x:=(t_1,t_2)$ and $y:=(s_1,s_2)$ be in $ \IH$, and  set
\begin{equation}\label{geodesic2}
b_{xy}:=\frac{(s_1)^2+(s_2)^2-((t_1)^2+(t_2)^2)}{2(s_1-t_1)}\quad\mbox{and}\quad r_{xy}:=\sqrt{(s_1-b_{xy})^2+(s_2)^2}
\end{equation}
if $t_1\neq s_1$. Then one has
\begin{equation}\label{geodesic1}
\exp_{y}^{-1}x=\left \{
\begin{array}{ll}
(0,s_2\ln\frac{t_2}{s_2}),&\text{ if } t_1=s_1,\\
\frac{s_2}{r_{xy}}({\rm artanh}\frac{{b_{xy}-s_1}}{r_{xy}}-{\rm
artanh}\frac{{b_{xy}-t_1}}{r_{xy}})(s_2,b_{xy}-s_1),&\text{ if }
t_1\neq s_1.
\end{array}
\right.
\end{equation}
and  $\gamma_{xy}:=(\gamma_{xy}^1,\gamma_{xy}^2)$  with $\gamma_{xy}^1$ and $\gamma_{xy}^2$ defined respectively by
\begin{equation}\label{geodesic-y-z-1}
\gamma_{xy}^1(s):=\left\{
\begin{array}{ll}
t_1, &\text{ if } t_1=s_1,\\
b_{xy}-r_{xy}\tanh\left((1-s)\cdot{\rm artanh}\frac{{b_{xy}-t_1}}{r_{xy}}+s\cdot{\rm
artanh}\frac{{b_{xy}-s_1}}{r_{xy}}\right),\quad &\text{ if } t_1\neq
s_1,
\end{array}\right.
\end{equation}
and
\begin{equation}\label{geodesic-y-z-2}
\gamma_{xy}^2(s):=\left\{
\begin{array}{ll}
e^{(1-s)\cdot\ln t_2+s\cdot\ln
s_2},&\text{ if } t_1=s_1,\\
\frac{r_{xy}}{\cosh\left((1-s)\cdot{\rm
artanh}\frac{{b_{xy}-t_1}}{r_{xy}}+s\cdot{\rm
artanh}\frac{{b_{xy}-s_1}}{r_{xy}}\right)},\quad &\text{ if }
t_1\neq s_1 ,
\end{array}\right.
\end{equation}
for any $s\in [0,1]$. Now we are ready to present the counterexample.

\begin{example}\label{example}{\rm
Let $x_0:=(0,1)$, and let $u_0:=(0,1)\in T_{x_0}\IH$ be a unit vector. Let $f_0:\IH\rightarrow\IR$ and $X_0:\IH\rightarrow T\IH$
be the function  and the vector field defined by \eqref{form} and \eqref{vector-field}, respectively.
We claim that, for each $x=(t_1,t_2)\in\IH$,
\begin{equation}\label{f-value}
f_0(x)=\left \{
\begin{array}{ll}
\ln{t_2},&\text{ if } t_1=0,\\
\frac{b_x}{r_x}\left({\rm artanh}\frac{b_x}{r_x}-{\rm
artanh}\frac{b_x-t_1}{r_x}\right),&\text{ if }
t_1\neq 0,
\end{array}
\right.
\end{equation}
and
\begin{equation}\label{vector-field-paral}
X_0(x)=\left \{
\begin{array}{ll}
(0,t_2),&\text{ if } t_1=0,\\
\left(\frac{b_xt_2^2-t_2(b_x-t_1)}{b_x^2+1},\frac{b_xt_2(b_x-t_1)+t_2^2}{b_x^2+1}\right),&\text{ if }
t_1\neq 0,
\end{array}
\right.
\end{equation}
where, for any $x$ with   $t_1\neq0$,
\begin{equation}\label{b-r-value}
b_x:=b_{xx_0}=\frac{t_1^2+t_2^2-1}{2t_1}\quad\mbox{and}\quad r_x:=r_{xx_0}=\sqrt{b_x^2+1}.
\end{equation}
Indeed, let $x=(t_1,t_2)\in\IH$. Then by \eqref{geodesic1}, we get that
$$\exp_{x_0}^{-1}x=\left \{
\begin{array}{ll}
(0,\ln{t_2}),&\text{ if } t_1=0,\\
\frac{1}{r_x}({\rm artanh}\frac{{b_x}}{r_x}-{\rm
artanh}\frac{{b_x-t_1}}{r_x})(1,b_x),&\text{ if }
t_1\neq 0;
\end{array}
\right.$$
thus \eqref{f-value} follows immediately from definition. To check  \eqref{vector-field-paral},
let $\gamma$ be the geodesic through $x$ and $x_0$. By the definition of $X_0$ and thanks to \eqref{para},
we have to show $\nabla_{\dot{\gamma}}X_0=0$.   To do this, write   $X_0:=(X^1_0,X^2_0)$ and   $\gamma:=(\gamma^1,\gamma^2)$. Then,
\begin{equation}\label{vector-field-paral-12}
X_0^1(x)=\left \{
\begin{array}{ll}
0,&\text{ if } t_1=0,\\
\frac{b_xt_2^2-t_2(b_x-t_1)}{b_x^2+1},&\text{ if }
t_1\neq 0,
\end{array}
\right.
\quad  \mbox{and} \quad
X_0^2(x)=\left \{
\begin{array}{ll}
t_2,&\text{ if } t_1=0,\\
\frac{b_xt_2(b_x-t_1)+t_2^2}{b_x^2+1},&\text{ if }
t_1\neq 0.
\end{array}
\right.
\end{equation}
In expression of the differential equations (see, e.g., \cite[p. 53]{Carmo1992}), we only need to verify  that $X_0$ and $\gamma$ satisfy
\begin{equation}\label{differential equations-para}
\left \{
\begin{array}{l}
\frac{{\rm
d}(X_0^1\circ\gamma)}{{\rm
d}s}-\frac{X_0^1\circ\gamma}{\gamma^2}\frac{{\rm
d}\gamma^2}{{\rm
d}s}-\frac{X_0^2\circ\gamma}{\gamma^2}\frac{{\rm
d}\gamma^1}{{\rm
d}s}=0,\\
\frac{{\rm
d}(X_0^2\circ\gamma)}{{\rm
d}s}+\frac{X_0^1\circ\gamma}{\gamma^2}\frac{{\rm
d}\gamma^1}{{\rm
d}s}-\frac{X_0^2\circ\gamma}{\gamma^2}\frac{{\rm
d}\gamma^2}{{\rm
d}s}=0.
\end{array}
\right.
\end{equation}
Without loss of generality,  we   assume that $t_1\neq0$,
 and adopt the expression \eqref{geodesic} of the geodesic, that is
%
  $(\gamma^1(\cdot),\gamma^2(\cdot)) = (\gamma^1(b_x,r_x;\cdot),\gamma^2(b_x,r_x;\cdot))$ with
   \begin{equation}\label{vector-field-para100}
\gamma^1(b_x,r_x;s)=b_x-r_x\tanh s\quad\mbox{and}\quad \gamma^2(b_x,r_x;s)=\frac{r_x}{\cosh s}\quad \mbox{for any }s\in\IR,
\end{equation}
(noting $x_0=\gamma(b_x,r_x;{\rm artanh}\frac{b_x}{r_x})$ and $x=\gamma(b_x,r_x;{\rm artanh}\frac{b_x-t_1}{r_x})$),   where $b_x$ and $r_x$ are defined by \eqref{b-r-value}.
 Thus,  using \eqref{vector-field-para100}, one conclude that, for each $s\in\IR$,
\begin{equation}\label{vector-field-para2}
\begin{array}{ll}
&X_0^1\circ\gamma(b_x,r_x;s)=\frac{1}{b_x^2+1}(\frac{b_xr_x^2}{\cosh^2s}-\frac{r_x^2\sinh s}{\cosh^2s}),\\
&X_0^2\circ\gamma(b_x,r_x;s)=\frac{1}{b_x^2+1}(\frac{b_xr_x^2\sinh s}{\cosh^2s}+\frac{r_x^2}{\cosh^2s}),
\end{array}
\end{equation}
and so
\begin{equation}\label{vector-field-para3}
\begin{array}{ll}
&\frac{{\rm d}X_0^1\circ\gamma(b_x,r_x;s)}{{\rm
d}s}=\frac{1}{b_x^2+1}\left(-\frac{2b_xr_x^2\sinh s}{\cosh^3s}-\frac{r_x^2(1-\sinh^2s)}{\cosh^3s}\right),\\
&\frac{{\rm d}X_0^2\circ\gamma(b_x,r_x;s)}{{\rm
d}s}=\frac{1}{b_x^2+1}\left(\frac{b_xr_x^2(1-\sinh^2s)}{\cosh^3s}-\frac{2r_x^2\sinh s}{\cosh^3s}\right).
\end{array}
\end{equation}
Moreover,   we also have  that
\begin{equation}\label{vector-field-para1}
\frac{{\rm
d}\gamma^1(b_x,r_x;s)}{{\rm
d}s}=-\frac{r_x}{\cosh^2s} \quad\mbox{and} \quad\frac{{\rm
d}\gamma^2(b_x,r_x;s)}{{\rm
d}s}=-\frac{r_x\sinh s}{\cosh^2s}\quad \mbox{for any }s\in\IR.
\end{equation}
Thus, 
 \eqref{differential equations-para}  is seen to hold Hence  $\nabla_{\dot{\gamma}}X_0=0$, and  \eqref{vector-field-paral} is checked. 

Below we show the following assertions:

{\rm (i)} $f_0$ is not quasi-convex.

{\rm (ii)} ${\rm grad}f_0\neq X_0$.

{\rm (iii)} $\nabla_{\frac{\partial}{\partial t_1}}X_0\neq0$.

{\rm (iv)} $X_0$ is not a gradient vector field.


To show assertion (i),  take $x=(\frac{1}{2},\frac{1}{2}),\ y=(-\frac{1}{2},\frac{1}{2})\in\IH$, and let $c_0:=-0.4$. Then
 $x,y\in L_{c_0,f_0}$  because,   by \eqref{f-value} and \eqref{b-r-value},
$$f_0(x)=f_0(y)=\frac{1}{\sqrt{5}}({\rm artanh}\frac{2}{\sqrt{5}}-{\rm artanh}\frac{1}{\sqrt{5}})=-0.4304\cdots<-0.4.$$
Let $\gamma_{xy}$ be the geodesic segment joining $x$ to $y$. Then,
\begin{equation}\label{geodesic-y-z}
\gamma_{xy}(s):=\left(-\frac{1}{\sqrt{2}}\tanh\left((2s-1){\rm artanh}\frac{1}{\sqrt{2}}\right),\frac{1}{\sqrt{2}\cosh\left((2s-1){\rm artanh}\frac{1}{\sqrt{2}}\right)}\right) \quad \mbox{for any }s\in[0,1]
\end{equation}
thanks to   \eqref{geodesic2}, \eqref{geodesic-y-z-1} and \eqref{geodesic-y-z-2}. Hence
 $\gamma_{xy}(\frac{1}{2})=(0,\frac{1}{\sqrt{2}})$, and
$$f_0(\gamma_{xy}(\frac{1}{2}))=\ln\frac{1}{\sqrt{2}}=-0.3465\cdots>-0.4,$$
This means that
 $\gamma_{xy}(\frac{1}{2})\not\in L_{c,f_0}$, and so $L_{c,f_0}$ is not convex; see figure (3.1).  In view of Proposition \ref{remark}, we see that $f_0$ is not quasi-convex, and  assertion (i) holds.

\begin{figure}{}
\centering
\subfigure{\includegraphics[width=0.50\textwidth]{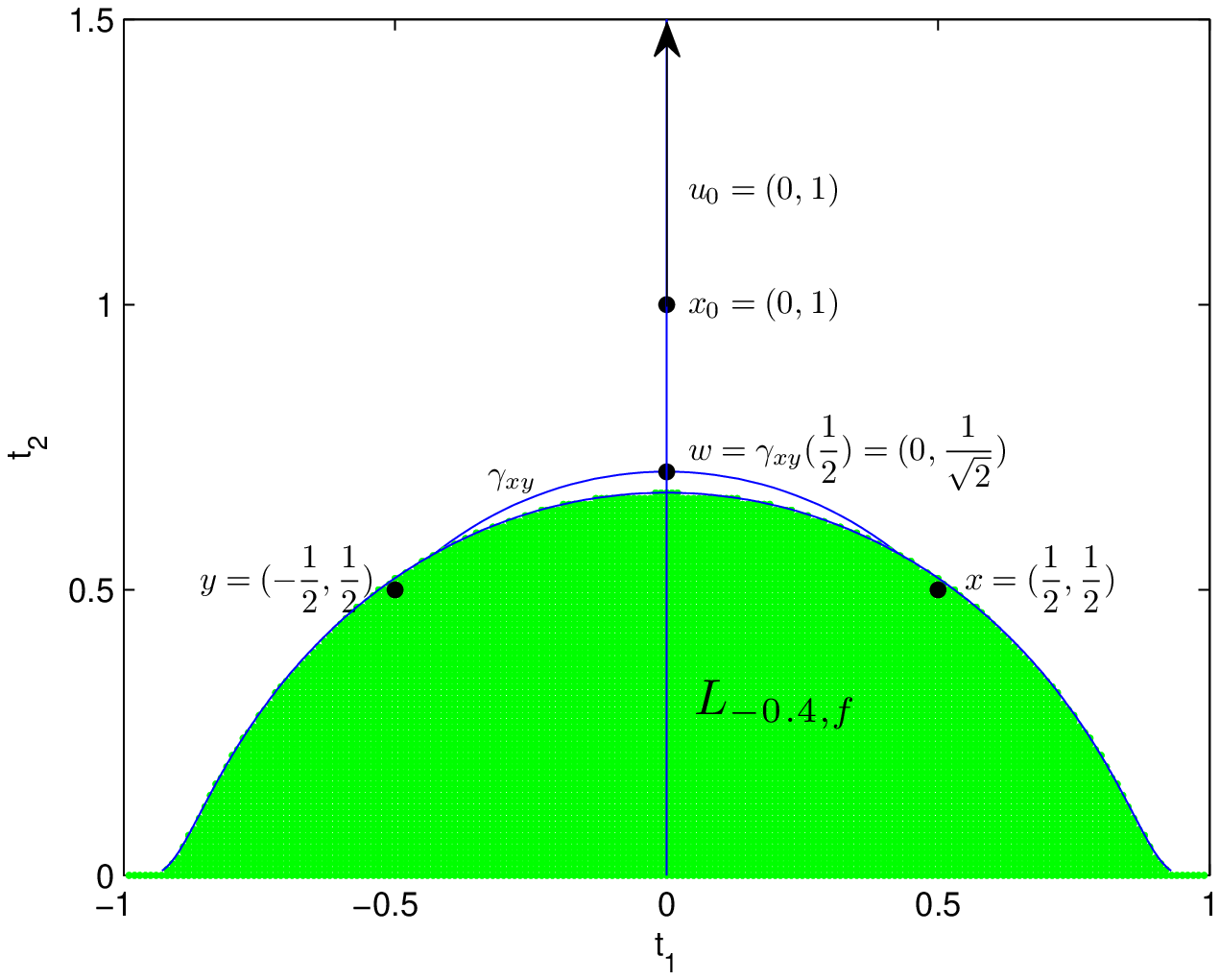}}
\caption*{Figure 3.1}
\end{figure}

To show assertion (ii), take $z:=(2,1)$. Then
\begin{equation}\label{z-value}
 b_{z}=1\quad\mbox{and}\quad r_z=\sqrt{2}
\end{equation}
 (see \eqref{b-r-value}). Therefore, we have by  \eqref{vector-field-paral} that
$X_0(z)=(1,0)$.
%
On the other hand, we get from \eqref{gradient-H} that
$${\rm grad}f_0(z)=\left(\frac{\partial f_0}{\partial t_1},\frac{\partial f_0}{\partial t_2}\right)$$
where $\frac{\partial f_0}{\partial t_1}$ and $\frac{\partial f_0}{\partial t_2}$ are classical partial derivatives in $\IR^2$. Then, using \eqref{f-value} and \eqref{b-r-value}, one calculates
$${\rm grad}f_0(z)=\left(\frac{\sqrt{2}}{8}\ln(3+2\sqrt{2})+\frac{1}{2},\frac{\sqrt{2}}{8}\ln(3+2\sqrt{2})-\frac{1}{2}\right).$$
Therefore ${\rm grad}f_0(z)\neq X_0(z)$,  and   assertion (ii) is checked. We further have that
\begin{equation}\label{h-n-x1}
\nabla_{\frac{\partial}{\partial t_1}}X_0(z)\neq0.
\end{equation}
Granting this,  assertion (iii) is also checked. To show \eqref{h-n-x1}, we get from \eqref{conenction-H-XY} that
\begin{equation}\label{h-n-1}
\nabla_{\frac{\partial}{\partial t_1}}X_0=\left(\frac{\partial X_0^1}{\partial t_1}-\frac{1}{t_2}X_0^2,\frac{\partial X_0^2}{\partial t_1}+\frac{1}{t_2}X_0^1\right).
\end{equation}
(noting that ${\frac{\partial}{\partial t_1}} =(1,0)$ for any $x\in\IH$).
Recalling that   $X_0^1$ and $X_0^2$ are given by \eqref{vector-field-paral-12} and $z:=(2,1)$,  we have that
$X_0^1(z)=1$ and  $X_0^2(z)=0$ (noting \eqref{z-value}). Furthermore,
  by elemental calculus, we can calculate the partial derivatives
\begin{equation}\label{h-n-3}
 \frac{\partial X_0^1}{\partial t_1}\mid_{z}=0\quad \mbox{and}\quad \frac{\partial X_0^2}{\partial t_1}\mid_{z}=-\frac{1}{2} .
\end{equation}
Thus we conclude from  \eqref{h-n-1}  that $\nabla_{\frac{\partial}{\partial t_1}}X_0\mid_{z}=(0,\frac{1}{2})\neq0$, as desired to show.

For assertion (iv), we suppose on the contrary that there exists a $C^\infty$ function $f$ such that $X_0={\rm grad}f$. Then
${\rm d}\circ {\rm d}f =0$ by the fundamental property (see, e.g., \cite[p. 17]{Sakai1996}). To proceed, note that
 $X_0=X_0^1\frac{\partial}{\partial t_1}+X_0^2\frac{\partial}{\partial t_2}$, where $X_0^1$ and $X_0^2$ are defined by \eqref{vector-field-paral-12}.
Then, we calculate by elementary calculus that
\begin{equation}\label{dd-z}
\left.\left(\frac{\partial(\frac{1}{t_2^2}X_0^1)}{\partial t_2}-\frac{\partial(\frac{1}{t_2^2}X_0^2)}{\partial t_1}\right)\right|_{x=(2,1)}=\frac{1}{2}\neq0.
\end{equation}
Furthermore, by \eqref{gradient-H} and \eqref{differential-H}, one has that
$${\rm d}f=\frac{1}{t_2^2}X_0^1{\rm d}t_1+\frac{1}{t_2^2}X_0^2{\rm d}t_2,$$
 and so the exterior differentiation
$${\rm
d}\circ {\rm
d}f=\left(\frac{\partial(\frac{1}{t_2^2}X_0^2)}{\partial t_1}-\frac{\partial(\frac{1}{t_2^2}X_0^1)}{\partial t_2}\right){\rm d}t_1\wedge {\rm d}t_2,$$
where $\wedge$ is the exterior product; see, e.g., \cite[p. 17]{Sakai1996}. This, together with \eqref{dd-z}, means that ${\rm d}\circ {\rm d}f\not =0$, and so
%
assertion (iv) is shown.

}

\end{example}

\section{Convexity properties of sub-level sets on Riemannian manifolds}
\label{sec:3}
Throughout this section, let $\kappa\in\IR$ and  assume that $M$ is a complete,
simply connected Riemannian manifold of constant sectional curvature
$\kappa$. As usual, define $D_\kappa:=\frac{\pi}{\sqrt{\kappa}}$ if $\kappa>0$ and $D_\kappa:=+\infty$ otherwise.
 Then, for any point $x,y\in M$ with ${\rm d}(x,y)<D_\kappa$, $\Gamma_{xy}$ contains a unique   minimal geodesic, (which will be  denoted by $\gamma_{xy}$), and
  any open ball $\IB(x,r)$ with $r\le \frac{D_\kappa}{2}$ is strongly convex for any $x\in M$;  see e.g., \cite[Proposition 4.1 (i)]{LiY2012}.   Let $x_0\in M$ and $u_0\in T_{x_0}M\setminus\{0\}$. Consider the following function $f_0:M\rightarrow\overline {\IR}$ defined by
\begin{equation}\label{f2}
f_0(x)=\left \{
\begin{array}{ll}
\langle u_0,\dot{\gamma}_{x_0x}(0)\rangle,&\text{ if } x\in \IB(x_0,\frac{D_\kappa}{2}),\\
+\infty,&\text{ otherwise},
\end{array}
\right.
\end{equation}
where ${\gamma}_{x_0x}(0)\in{\Gamma}_{x_0x}$ is the unique minimal geodesic lying in $\IB(x_0,\frac{D_\kappa}{2})$. It is clear that ${\rm dom}f_0=\IB(x_0,\frac{D_\kappa}{2})$ is strongly convex. If $M$ is a Hadamard manifold, function \eqref{f2} is reduced  to the function defined by \eqref{form}, that is
\begin{equation}\label{f3}
f_0(x):=\langle u_0,\exp_{x_0}^{-1}x\rangle\quad\mbox{for any $x\in M$}.
\end{equation}
For any $c\in\IR$,  the sub-level set of $f_0$ is denoted by $L_{c,f_0}(c\in\IR)$ and defined by
$$
L_{c,f_0}:=\{x\in M:f_0(x)\le c\}.
$$

Note by Example \ref{example} that $L_{c,f_0}$ is not strongly convex in general. This section is devoted to
 study of the   convexity property of the sub-level sets $L_{c,f_0}$ ($c\in\IR$). For this purpose, we first
 recall that a geodesic triangle $\triangle(p_{1}p_{2}p_{3})$ in $M$
is a figure consisting of three points $p_{1},p_{2},p_{3}$ (the
vertices of $\triangle(p_{1}p_{2}p_{3})$) and three minimal geodesic
segments $\gamma_{i}$ (the edges of $\triangle(p_{1}p_{2}p_{3})$)
such that $\gamma_i(0)=p_{i-1}$ and $\gamma_i(1)=p_{i+1}$ with
$i=1,2,3$ ( mod$3$). For each $i=1,2,3$ ( mod$3$), the inner angle
of $\triangle(p_{1}p_{2}p_{3})$ at $p_{i}$ is denoted by $\angle
p_{i}$, which equals the angle between the tangent vectors
$\dot{\gamma}_{i+1}(0)$ and $-\dot{\gamma}_{i-1}(1)$. The following proposition (i.e., comparison theorem for triangles) follows immediately from \cite[p.161 Theorem 4.2 (ii), p. 138 Low of Cosines and p.  167 Remark 4.6]{Sakai1996}.

\begin{proposition}\label{compare}
Let $\triangle(p_{1}p_{2}p_{3})$ be a geodesic
triangle in $M$ of the perimeter less than
$2D_{\kappa}$.
Set $l_i={\rm d}(p_{i+1},p_{i-1})$ for each $i=1,2,3$. Then, the
following relations hold:
 \begin{equation}\label{cop>0}
 l_i^2< l_{i-1}^2+l_{i+1}^2-2l_{i-1}l_{i+1}\cos\angle p_i\quad\mbox{if $\kappa>0$},
\end{equation}
and
\begin{equation}\label{cop>0}
 l_i^2> l_{i-1}^2+l_{i+1}^2-2l_{i-1}l_{i+1}\cos\angle p_i\quad\mbox{if $\kappa<0$}.
\end{equation}

\end{proposition}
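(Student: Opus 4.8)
The plan is to reduce the statement to the exact law of cosines in the two-dimensional space form of constant curvature $\kappa$ and then to compare that law with its Euclidean analogue. Since $M$ is complete, simply connected and of constant sectional curvature $\kappa$, the geodesic triangle $\triangle(p_1p_2p_3)$ lies in a totally geodesic surface of $M$ isometric to the model surface $M_\kappa^2$ (the round sphere of curvature $\kappa$ if $\kappa>0$, the hyperbolic plane of curvature $\kappa$ if $\kappa<0$): the two edges issuing from a vertex span, via the exponential map, a totally geodesic surface of curvature $\kappa$ that also contains the third vertex and the opposite edge. The perimeter hypothesis is used precisely here: it guarantees that each edge is minimal and, when $\kappa>0$, that the angular lengths $\sqrt{\kappa}\,l_i$ all lie in $(0,\pi)$, so that the three vertices span a genuine nondegenerate model triangle and $\sin(\sqrt{\kappa}\,l_i)>0$. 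At this point I would invoke the comparison results \cite[p.\,161 Theorem 4.2 (ii)]{Sakai1996} and \cite[p.\,167 Remark 4.6]{Sakai1996}; because the curvature is \emph{constant} equal to $\kappa$, both comparison directions apply simultaneously, so the actual inner angle $\angle p_i$ coincides with the corresponding angle of the model triangle.

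Next I would apply the law of cosines in $M_\kappa^2$ (\cite[p.\,138]{Sakai1996}) at the vertex $p_i$, whose two adjacent edges have lengths $l_{i-1}$ and $l_{i+1}$, included angle $\angle p_i$, and opposite edge $l_i$. Writing $\mu=\sqrt{\kappa}$ for $\kappa>0$ and $\nu=\sqrt{-\kappa}$ for $\kappa<0$, this gives the exact identities
\[
\cos(\mu l_i)=\cos(\mu l_{i-1})\cos(\mu l_{i+1})+\sin(\mu l_{i-1})\sin(\mu l_{i+1})\cos\angle p_i\qquad(\kappa>0)
\]
and
\[
\cosh(\nu l_i)=\cosh(\nu l_{i-1})\cosh(\nu l_{i+1})-\sinh(\nu l_{i-1})\sinh(\nu l_{i+1})\cos\angle p_i\qquad(\kappa<0).
\]
The two inequalities to be proved then amount to comparing each of these identities with the Euclidean relation $l_i^2=l_{i-1}^2+l_{i+1}^2-2l_{i-1}l_{i+1}\cos\angle p_i$.

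To carry out that comparison I would fix the two adjacent side lengths $a:=l_{i-1}$, $b:=l_{i+1}$ and the angle $\theta:=\angle p_i$, and view the opposite side $c$ determined by the model law of cosines as a function $c=c(\kappa)$ of the curvature parameter. Substituting the expansions $\cos t=1-\tfrac{t^2}{2}+\tfrac{t^4}{24}-\cdots$ and $\sin t=t-\tfrac{t^3}{6}+\cdots$ (and their hyperbolic counterparts) shows that $c(0)^2=a^2+b^2-2ab\cos\theta$ and that the leading curvature correction is $-\tfrac{1}{3}a^2b^2\sin^2\theta\,\kappa+o(\kappa)$, strictly negative for $\theta\in(0,\pi)$. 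Thus $c(\kappa)^2$ is strictly less than the Euclidean value when $\kappa>0$ and strictly greater when $\kappa<0$, which are exactly the two asserted inequalities. Equivalently, one may simply cite the Toponogov--Alexandrov hinge comparison against the Euclidean model: the constant curvature $\kappa\ge0$ (resp. $\kappa\le0$) forces the opposite side to be no longer (resp. no shorter) than in the plane, with strictness because the curvature is bounded away from $0$.

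I expect the only genuine obstacle to be upgrading the infinitesimal sign computation to a strict inequality valid for \emph{every} admissible triangle, with $\theta$ allowed up to $\pi$ and $\mu l_i$ ranging over all of $(0,\pi)$ in the spherical case. Concretely this requires showing that $\kappa\mapsto c(\kappa)$ is strictly monotone on the whole admissible range, i.e. controlling the sign of $c'(\kappa)$ globally rather than only at $\kappa=0$; keeping $\sin(\mu l_i)>0$ via the perimeter bound is what makes the model law of cosines invertible and keeps this sign analysis well defined. This global monotonicity is precisely what the cited comparison theorems of \cite{Sakai1996} package, so in a careful write-up I would either appeal to them directly or isolate it as a one-variable lemma proved by estimating $c'(\kappa)$.
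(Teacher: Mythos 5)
Your proposal is correct, but only because your ``fallback'' option is, in fact, the paper's entire proof: the paper offers no argument at all beyond citing \cite[p.~161, Theorem 4.2(ii); p.~138, Law of Cosines; p.~167, Remark 4.6]{Sakai1996}, i.e.\ the hinge (Toponogov) comparison for a lower curvature bound, its upper-bound counterpart, and the space-form laws of cosines. Your primary route is genuinely different and more self-contained: reduce the triangle to a $2$-dimensional totally geodesic model surface (legitimate under constant curvature; this is exactly the paper's Proposition~\ref{axiom}, and the same reduction the paper itself performs later in Lemma~\ref{cmpr2}), write the exact spherical/hyperbolic law of cosines at $p_i$, and compare with the Euclidean law by varying the curvature. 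Your first-order computation is right: with $a=l_{i-1}$, $b=l_{i+1}$, $\theta=\angle p_i$ one gets
\begin{equation*}
c(\kappa)^2=a^2+b^2-2ab\cos\theta-\tfrac{1}{3}a^2b^2\sin^2\theta\,\kappa+o(\kappa),
\end{equation*}
and what this buys over the paper's bare citation is a quantitative view of the deficit, including the factor $\sin^2\theta$ that shows exactly where strictness can degenerate. However, as you yourself concede, this only settles the sign of $c(\kappa)^2-c(0)^2$ for $\kappa$ near $0$; the proposition needs strict monotonicity of $\kappa\mapsto c(\kappa)$ over the whole admissible range (with $\sqrt{\kappa}\,l_i$ allowed up to $\pi$ in the spherical case), and that one-variable lemma is asserted, not proved --- you defer it back to the very comparison theorems you were trying to make explicit. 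So judged as a self-contained argument the proposal has a genuine hole at its single analytic step; judged as a citation-based argument it is complete and coincides with the paper's proof.

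Two accuracy points for any careful write-up. First, strictness of the asserted inequalities requires the hinge at $p_i$ to be nondegenerate, i.e.\ $l_{i-1},l_{i+1}>0$ and $\angle p_i\in(0,\pi)$: for $\angle p_i\in\{0,\pi\}$ both the model and the Euclidean laws of cosines give equality (a caveat the paper's statement also glosses over). Second, in the pure comparison route the two inequalities come from two distinct results --- a lower bound $K\ge\kappa$ gives one direction, an upper bound $K\le\kappa$ the other (in \cite{Sakai1996}, Theorem 4.2(ii) versus Remark 4.6) --- and the strictness you attribute to ``curvature bounded away from $0$'' is really a rigidity statement (equality in the hinge comparison forces a flat totally geodesic triangle), which must be quoted explicitly rather than taken for granted.
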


 Another property for  Riemannian manifolds of constant curvature, which will be used in sequel,  is   the     axiom of plane  described as follows (see, e.g., \cite[p. 136]{Sakai1996}):
 \begin{proposition}\label{axiom}
 Let  $x\in M$ and let $W$ be a $k$-dimensional subspace of $T_xM$. Then the submanifold $N:=\exp_x(W\cap \IB(0_x,\rho))$ is a $k$-dimensional totally geodesic submanifold of $M$ for any  $0<\rho<D_\kappa$. Recall  a $k$-dimensional submanifold $N\subset M$  is  totally geodesic  iff any geodesic $\gamma$ of $M$ with the initial direction $u\in TN$ is contained in $N$; see, e.g., \cite[p. 48]{Sakai1996}.
\end{proposition}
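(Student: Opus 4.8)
The plan is to exhibit $N$ as the fixed-point set of a reflection isometry of the ball $\IB(x,\rho)$, and then to invoke the classical fact that the fixed-point set of an isometry is totally geodesic. The crucial structural input is the local isotropy of a space form: because $M$ has constant curvature $\kappa$, every linear isometry of $T_xM$ is induced, through $\exp_x$, by a genuine isometry of $\IB(x,\rho)$.

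First I would fix the orthogonal reflection $R\colon T_xM\to T_xM$ determined by $W$, namely $R|_W={\rm id}$ and $R|_{W^\perp}=-{\rm id}$; this is a linear isometry whose fixed subspace is exactly $W$. Since $M$ is complete and simply connected of constant curvature $\kappa$ and $\rho<D_\kappa$, the map $\exp_x$ restricts to a diffeomorphism of $\IB(0_x,\rho)$ onto $\IB(x,\rho)$, and in the associated normal coordinates the metric is rotationally symmetric, of the form ${\rm d}r^2+{\rm s}_\kappa(r)^2\,{\rm d}\Omega^2$ with ${\rm s}_\kappa(r)$ equal to $\frac{\sin(\sqrt{\kappa}\,r)}{\sqrt{\kappa}}$, $r$, or $\frac{\sinh(\sqrt{-\kappa}\,r)}{\sqrt{-\kappa}}$ according to the sign of $\kappa$. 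This form depends only on the Euclidean structure of $T_xM$ and the radial distance, hence is preserved by the orthogonal transformation $R$; consequently $\Phi:=\exp_x\circ R\circ\exp_x^{-1}$ is an isometry of $\IB(x,\rho)$ onto itself. As $\exp_x$ is injective on $\IB(0_x,\rho)$, a point $\exp_x(v)$ is fixed by $\Phi$ if and only if $Rv=v$, i.e. $v\in W$; hence ${\rm Fix}(\Phi)=\exp_x(W\cap\IB(0_x,\rho))=N$.

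Next I would use that the fixed-point set of an isometry is totally geodesic. Concretely, for $p\in N$ and $u\in T_pM$ with ${\rm d}\Phi_p u=u$, the geodesic $\gamma$ determined by $\gamma(0)=p$ and $\dot\gamma(0)=u$ satisfies $\Phi\circ\gamma=\gamma$, since $\Phi$ carries geodesics to geodesics and $\Phi\circ\gamma$ shares the initial data of $\gamma$; thus $\gamma$ stays in ${\rm Fix}(\Phi)=N$, which is precisely the notion of totally geodesic recalled in the statement. Evaluating at $p=x$ gives ${\rm d}\Phi_x=R$, whose fixed subspace is $W$, so $T_xN=W$ and $N$ is $k$-dimensional; the embedded submanifold structure and the constancy of the dimension follow at once from $\exp_x$ being a diffeomorphism on $\IB(0_x,\rho)$ that carries the $k$-dimensional disk $W\cap\IB(0_x,\rho)$ onto $N$.

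The only step carrying genuine content is the construction of $\Phi$, that is, the assertion that a linear reflection of $T_xM$ integrates to an isometry of the geodesic ball; this is exactly where constant sectional curvature is indispensable. If one prefers to avoid the normal-coordinate computation, one may instead invoke the Killing--Hopf theorem to identify $M$ with the model space $M_\kappa$ and transport the statement there, or appeal to a Cartan--Ambrose--Hicks type extension theorem: since in a space form the curvature tensor is parallel and preserved by every linear isometry of the tangent space, such a theorem produces the required isometry $\Phi$. The remaining verifications are routine.
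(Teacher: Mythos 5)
The paper never proves this proposition: it is quoted as the classical ``axiom of plane'' with a pointer to \cite[p.~136]{Sakai1996}, so there is no internal argument to compare against. Your reflection argument is a correct, essentially self-contained proof, and it is in fact the standard textbook route behind the cited result: the orthogonal reflection $R$ with fixed space $W$ induces $\Phi=\exp_x\circ R\circ\exp_x^{-1}$, which is an isometry of $\IB(x,\rho)$ because the polar-coordinate form $\mathrm{d}r^2+s_\kappa(r)^2\,\mathrm{d}\Omega^2$ of a constant-curvature metric is invariant under orthogonal maps of $T_xM$ and because $\rho<D_\kappa$ keeps $\exp_x$ a diffeomorphism on the ball; then $\mathrm{Fix}(\Phi)=N$ by injectivity of $\exp_x$, and fixed-point sets of isometries are totally geodesic by uniqueness of geodesics. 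Two small points deserve attention. First, to match the definition of totally geodesic recalled in the statement you need $T_pN\subseteq\{u\in T_pM:\mathrm{d}\Phi_p u=u\}$ at \emph{every} $p\in N$, not only at $p=x$: this is one line (if $c$ is a curve in $N$ with $c(0)=p$ and $\dot c(0)=u$, then $\Phi\circ c=c$ forces $\mathrm{d}\Phi_p u=u$), but as written you verify the tangency identification only at the center. Second, the geodesic $\gamma$ can be asserted to lie in $N$ only for parameter values at which it remains inside $\IB(x,\rho)$, where $\Phi$ is defined; this local reading is the only possible one, since the bounded disk $N$ cannot contain a complete geodesic --- a defect of the proposition's literal phrasing rather than of your proof, and exactly the form in which the paper later uses the result. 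Your alternative suggestion via Killing--Hopf or Cartan--Ambrose--Hicks is equally valid and is what would let $\Phi$ extend to a global isometry of $M$.
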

 The
following lemma, taken from \cite[Theorem 3.1 and Remark 3.6]{Afsari2013},   plays a very key role in our study afterwards.

\begin{lemma}\label{cmpr1}
Let   $\triangle(ypq)$ be a geodesic triangle in $M$
of the perimeter less than $2D_\kappa$. Let
$\triangle(\tilde y\tilde p\tilde q)$ be a triangle in $\IR^2$ such
that
\begin{equation}\label{tri-comp}
  {\rm d}(y,p)=\|\overrightarrow{\tilde y\tilde p}\|,\quad {\rm d}(y,q)=\|\overrightarrow{\tilde y\tilde q}\|\quad\mbox{and}\quad \angle pyq=\angle \tilde p\tilde y\tilde
q.
\end{equation}
Let   $x$ be in the minimal geodesic joining $p$ to $q$, and
$\tilde x$ be the corresponding point  in the interval $[\tilde p,\tilde
q]$ satisfying
\begin{equation}\label{crro-pint}
 \angle pyx=\angle\tilde p\tilde y\tilde x \quad\mbox{and}\quad
 \angle qyx=\angle\tilde q\tilde y\tilde x
\end{equation}
 (see Figure 4.1). Then, the following assertions hold:
\begin{equation}\label{distance0}
 {\rm d}(y,x)\ge\|\overrightarrow{\tilde y\tilde x}\|\;\mbox{ if $\kappa\ge0$}\quad\mbox{and}\quad
 {\rm d}(y,x)\le\|\overrightarrow{\tilde y\tilde x}\|\;\mbox{ if $\kappa\le0$}.
\end{equation}
\end{lemma}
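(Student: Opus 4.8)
The plan is to prove Lemma~\ref{cmpr1} as a direct consequence of the comparison theorem for triangles (Proposition~\ref{compare}), applied to a suitably chosen subtriangle of $\triangle(ypq)$. The key observation is that the auxiliary conditions \eqref{crro-pint} split the Euclidean triangle $\triangle(\tilde y\tilde p\tilde q)$ at $\tilde x$ into two triangles whose angles at $\tilde y$ match the corresponding angles in $M$, so I should work with one of the two subtriangles $\triangle(y p x)$ rather than with the whole configuration at once. First I would fix attention on the triangle $\triangle(ypx)$ in $M$, whose perimeter is at most that of $\triangle(ypq)$ and hence less than $2D_\kappa$, so that Proposition~\ref{compare} applies. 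Let me write $\alpha:=\angle pyx$ for its angle at $y$; by the first equation in \eqref{crro-pint} this equals the Euclidean angle $\angle\tilde p\tilde y\tilde x$.

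Next I would set up the comparison. In the Euclidean plane the law of cosines for $\triangle(\tilde y\tilde p\tilde x)$ gives
\begin{equation}\label{eucl-los}
\|\overrightarrow{\tilde y\tilde x}\|^2=\|\overrightarrow{\tilde y\tilde p}\|^2+\|\overrightarrow{\tilde p\tilde x}\|^2-2\|\overrightarrow{\tilde y\tilde p}\|\,\|\overrightarrow{\tilde p\tilde x}\|\cos\angle\tilde p,
\end{equation}
where $\angle\tilde p$ is the Euclidean angle at $\tilde p$. Comparing this with \eqref{cop>0} applied to $\triangle(ypx)$ at the vertex $p$ requires that the side lengths and the angle at $p$ be matched between the manifold triangle and the Euclidean one. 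The point I must verify is that the construction of $\tilde x$ forces ${\rm d}(p,x)=\|\overrightarrow{\tilde p\tilde x}\|$ and $\angle\tilde p\tilde y\tilde p\hbox{-side lengths}$ to agree with the manifold data; this is where both constraints in \eqref{crro-pint}, together with $x$ lying on the geodesic from $p$ to $q$ and $\tilde x$ on the segment $[\tilde p,\tilde q]$, are used. Once the matching sides and included angle are established, the inequality \eqref{cop>0} for $\kappa<0$ (respectively $\kappa>0$) compared term-by-term against \eqref{eucl-los} yields ${\rm d}(y,x)\le\|\overrightarrow{\tilde y\tilde x}\|$ (respectively $\ge$), which is exactly \eqref{distance0}; the borderline case $\kappa=0$ is handled by the equality in the law of cosines, since a constant-curvature-zero manifold is locally isometric to $\IR^2$ via Proposition~\ref{axiom}.

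The main obstacle I anticipate is the rigorous justification that $\tilde x$ is well defined and that the side $\tilde p\tilde x$ and the angle at $\tilde p$ genuinely correspond to the manifold quantities, i.e.\ that the two angle conditions in \eqref{crro-pint} are consistent with $\tilde x\in[\tilde p,\tilde q]$ and pin down $\tilde x$ uniquely. This amounts to a monotonicity argument: as a point moves along $[\tilde p,\tilde q]$ the angle $\angle\tilde p\tilde y\tilde x$ increases continuously from $0$ to $\angle\tilde p\tilde y\tilde q$, so by the intermediate value theorem there is a unique position realizing the prescribed angle, and the second condition $\angle\tilde q\tilde y\tilde x=\angle qyx$ is then automatic because the two manifold angles at $y$ sum to $\angle pyq=\angle\tilde p\tilde y\tilde q$. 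A secondary technical point is ensuring the subtriangle $\triangle(ypx)$ is non-degenerate and has perimeter below $2D_\kappa$ so that Proposition~\ref{compare} is applicable; this follows since $x$ lies on the minimal geodesic from $p$ to $q$, so ${\rm d}(p,x)+{\rm d}(x,q)={\rm d}(p,q)$ and the perimeter of $\triangle(ypx)$ does not exceed that of $\triangle(ypq)$. With these points settled, the remainder is the routine term-by-term comparison described above, and I would simply cite \cite[Theorem 3.1 and Remark 3.6]{Afsari2013} for the statement itself rather than reproduce its full proof.
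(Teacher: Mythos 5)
The paper itself gives no proof of Lemma \ref{cmpr1}: it is imported verbatim from \cite[Theorem 3.1 and Remark 3.6]{Afsari2013}, so your closing fallback of ``simply citing'' that reference coincides with what the authors actually do. The substantive part of your proposal, however --- the derivation from Proposition \ref{compare} --- breaks down at exactly the step you flag as ``the point I must verify'', and that step is not verifiable because it is false. The angle conditions \eqref{crro-pint} do \emph{not} force ${\rm d}(p,x)=\|\overrightarrow{\tilde p\tilde x}\|$, nor do they force the angle of $\triangle(ypx)$ at $p$ to equal the Euclidean angle $\angle \tilde y\tilde p\tilde x$. Concretely, take $M$ to be the unit sphere ($\kappa=1$, so $D_\kappa=\pi$), $y=(0,0,1)$, $p=(1,0,0)$, $q=(0,1,0)$, so that ${\rm d}(y,p)={\rm d}(y,q)={\rm d}(p,q)=\frac{\pi}{2}$, the perimeter is $\frac{3\pi}{2}<2D_\kappa$, and $\angle pyq=\frac{\pi}{2}$; a comparison triangle satisfying \eqref{tri-comp} is $\tilde y=(0,0)$, $\tilde p=(\frac{\pi}{2},0)$, $\tilde q=(0,\frac{\pi}{2})$. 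Let $x=(\frac{1}{\sqrt{2}},\frac{1}{\sqrt{2}},0)$ be the midpoint of the geodesic from $p$ to $q$. By symmetry $\angle pyx=\angle qyx=\frac{\pi}{4}$, so \eqref{crro-pint} gives $\tilde x=(\frac{\pi}{4},\frac{\pi}{4})$. Then
$$
{\rm d}(p,x)=\frac{\pi}{4}\neq\frac{\pi\sqrt{2}}{4}=\|\overrightarrow{\tilde p\tilde x}\|
\qquad\mbox{and}\qquad
\angle ypx=\frac{\pi}{2}\neq\frac{\pi}{4}=\angle \tilde y\tilde p\tilde x
$$
(the geodesic $px$ runs along the equator while $py$ runs due north). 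So both pieces of data your term-by-term comparison needs are mismatched.

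Worse, this is not a repairable technicality: if the matching did hold, your argument would prove the \emph{reverse} of \eqref{distance0}. For $\kappa>0$, Proposition \ref{compare} applied at the vertex $p$ of $\triangle(ypx)$ gives
$$
{\rm d}(y,x)^2<{\rm d}(y,p)^2+{\rm d}(p,x)^2-2\,{\rm d}(y,p)\,{\rm d}(p,x)\cos\angle ypx,
$$
and if the right-hand side coincided with the Euclidean right-hand side you would conclude ${\rm d}(y,x)<\|\overrightarrow{\tilde y\tilde x}\|$, contradicting the claim ${\rm d}(y,x)\ge\|\overrightarrow{\tilde y\tilde x}\|$; the case $\kappa<0$ is reversed in the same way. (Your proposal in fact pairs the inequalities of Proposition \ref{compare} with the wrong signs of $\kappa$, which is how the sketch appears to land on \eqref{distance0}.) Put differently, the truth of Lemma \ref{cmpr1} together with Proposition \ref{compare} \emph{forces} your matching to fail whenever $\kappa\neq0$ and the triangle is nondegenerate: in the example above $\|\overrightarrow{\tilde p\tilde x}\|>{\rm d}(p,x)$ and $\angle\tilde y\tilde p\tilde x<\angle ypx$, and it is precisely this distortion that produces the inequality \eqref{distance0}. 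A correct proof must quantify that distortion rather than assume it away --- e.g.\ reduce to the two-dimensional model surface via Proposition \ref{axiom} and carry out the spherical/hyperbolic trigonometry with a monotonicity argument in the angle at $y$, or run an Alexandrov/Toponogov-type argument gluing the side-length comparison triangles of $\triangle(ypx)$ and $\triangle(yxq)$ along $yx$. Neither is a one-step application of the hinge law of cosines, which is presumably why the authors quote the result from \cite{Afsari2013} rather than prove it.
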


\begin{figure}\label{}
\centering
\subfigure{\includegraphics[width=0.7\textwidth]{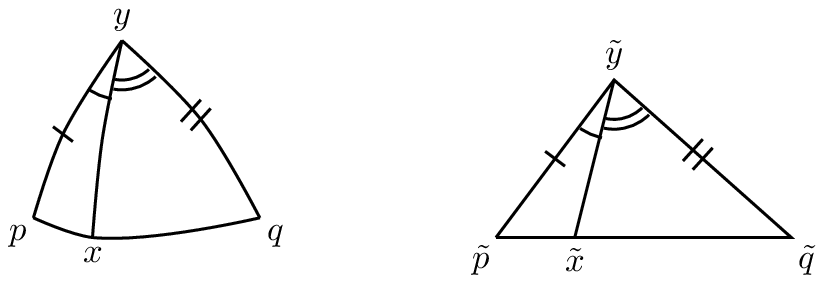}}
\caption*{Figure 4.1}
\end{figure}

Recall that, for any $x,y\in M$,  $\gamma_{xy}\in \Gamma_{xy}$ denote the  unique   minimal geodesic:   $\gamma_{xy}:[0,1]\to M$ is the minimal geodesic satisfying $\gamma_{xy}(0)=x$ and  $\gamma_{xy}(1)=y$.

\begin{lemma}\label{cmpr2}
Let  $\triangle(ypq)$ be a geodesic triangle in $M$
of the perimeter less than $2D_\kappa$. Let
$\gamma:=\gamma_{pq}:[0,1]\rightarrow M$ be the unique minimal geodesic joining $p$ to $q$.  Then, for each
$t\in(0,1)$, there exist two positive numbers $a_t$ and $b_t$ satisfying
\begin{equation}\label{plus=ab}
   a_t+b_t\;\left\{\begin{array}{ll}
   \ge1,\quad &\mbox{if } \kappa\ge0,\\
   \le1, &\mbox{if } \kappa\le0,\\
   \end{array}
   \right.
\end{equation}
 such that
\begin{equation}\label{cmpr3}
\dot{\gamma}_{y\gamma(t)}(0)=a_t\dot{\gamma}_{yp}(0)+b_t\dot{\gamma}_{yq}(0).
\end{equation}
\end{lemma}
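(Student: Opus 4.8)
The plan is to reduce the whole configuration to a two–dimensional totally geodesic surface, read off $a_t,b_t$ from a planar decomposition, and then compare with the Euclidean triangle of Lemma \ref{cmpr1}. Assuming $\triangle(ypq)$ is a genuine (nondegenerate) triangle, the initial vectors $v_p:=\dot{\gamma}_{yp}(0)$ and $v_q:=\dot{\gamma}_{yq}(0)$ are linearly independent and span a $2$-dimensional subspace $W\subseteq T_yM$. First I would invoke the axiom of plane (Proposition \ref{axiom}) to form the totally geodesic surface $N:=\exp_y(W\cap\IB(0_y,\rho))$ for a suitable $\rho<D_\kappa$. Since $v_p,v_q\in W=T_yN$ and $N$ is totally geodesic, the geodesics $\gamma_{yp}$ and $\gamma_{yq}$ lie in $N$, so $p,q\in N$; as the perimeter hypothesis guarantees a unique minimal geodesic between each pair of vertices, the edge $\gamma=\gamma_{pq}$ coincides with the minimal geodesic joining $p$ to $q$ inside $N$, whence $\gamma(t)\in N$ and $\gamma_{y\gamma(t)}\subseteq N$ for every $t$. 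Consequently $\dot{\gamma}_{y\gamma(t)}(0)\in T_yN=W$, and since $v_p,v_q$ form a basis of $W$ there are unique reals $a_t,b_t$ with $\dot{\gamma}_{y\gamma(t)}(0)=a_tv_p+b_tv_q$, which is exactly \eqref{cmpr3}.

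Next I would work entirely inside the $2$-dimensional space form $N$ and pass to unit directions $u_p:=v_p/\|v_p\|$, $u_q:=v_q/\|v_q\|$, $u_t:=\dot{\gamma}_{y\gamma(t)}(0)/{\rm d}(y,\gamma(t))$, recalling that $\|v_p\|={\rm d}(y,p)$ and $\|v_q\|={\rm d}(y,q)$. Writing $\theta:=\angle pyq$, $\phi_p:=\angle py\gamma(t)$ and $\phi_q:=\angle qy\gamma(t)$, the point $\gamma(t)$ lies on the edge $pq$ of the (convex) geodesic triangle, so for $t\in(0,1)$ the cevian $\gamma_{y\gamma(t)}$ runs into the interior of the triangular region; hence $u_t$ lies strictly inside the angular sector spanned by $u_p$ and $u_q$, giving $\phi_p,\phi_q>0$ and $\phi_p+\phi_q=\theta\in(0,\pi)$. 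Decomposing $u_t=\alpha u_p+\beta u_q$ in the planar space $T_yN$ gives, by the law of sines, $\alpha=\sin\phi_q/\sin\theta>0$ and $\beta=\sin\phi_p/\sin\theta>0$; since $a_t=({\rm d}(y,\gamma(t))/{\rm d}(y,p))\,\alpha$ and $b_t=({\rm d}(y,\gamma(t))/{\rm d}(y,q))\,\beta$, both $a_t$ and $b_t$ are positive.

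Finally I would bring in the Euclidean comparison triangle $\triangle(\tilde y\tilde p\tilde q)$ and the point $\tilde x$ associated to $x=\gamma(t)$ as in Lemma \ref{cmpr1}. The construction \eqref{tri-comp}--\eqref{crro-pint} forces the angles at $\tilde y$ to equal $\theta,\phi_p,\phi_q$, so the unit direction of $\overrightarrow{\tilde y\tilde x}$ has exactly the same planar coordinates $\alpha,\beta$ relative to the unit directions of $\overrightarrow{\tilde y\tilde p},\overrightarrow{\tilde y\tilde q}$, while $\|\overrightarrow{\tilde y\tilde p}\|={\rm d}(y,p)$ and $\|\overrightarrow{\tilde y\tilde q}\|={\rm d}(y,q)$. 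Writing $\tilde x=(1-\lambda)\tilde p+\lambda\tilde q$ (so its barycentric coefficients sum to $1$) and repeating the previous computation with $\|\overrightarrow{\tilde y\tilde x}\|$ in place of ${\rm d}(y,\gamma(t))$ gives
\[
a_t+b_t={\rm d}(y,\gamma(t))\Big(\tfrac{\alpha}{{\rm d}(y,p)}+\tfrac{\beta}{{\rm d}(y,q)}\Big)
\quad\text{and}\quad
1=\|\overrightarrow{\tilde y\tilde x}\|\Big(\tfrac{\alpha}{{\rm d}(y,p)}+\tfrac{\beta}{{\rm d}(y,q)}\Big).
\]
Dividing the two identities yields the clean relation $a_t+b_t={\rm d}(y,\gamma(t))/\|\overrightarrow{\tilde y\tilde x}\|$. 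Now Lemma \ref{cmpr1} gives ${\rm d}(y,\gamma(t))\ge\|\overrightarrow{\tilde y\tilde x}\|$ when $\kappa\ge0$ and ${\rm d}(y,\gamma(t))\le\|\overrightarrow{\tilde y\tilde x}\|$ when $\kappa\le0$, which at once produces $a_t+b_t\ge1$ and $a_t+b_t\le1$ respectively, i.e. \eqref{plus=ab}.

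The step I expect to be most delicate is the reduction to the plane: one must be certain that $\dot{\gamma}_{y\gamma(t)}(0)$ genuinely lies in $W=\mathrm{span}\{v_p,v_q\}$, which rests on placing the two edges from $y$, the opposite edge $\gamma_{pq}$, and the cevian $\gamma_{y\gamma(t)}$ inside one totally geodesic surface; the axiom of plane together with uniqueness of minimal geodesics under the perimeter bound supplies this. The only other point needing care is the positivity of $a_t,b_t$, i.e. that $\gamma(t)$ is seen from $y$ strictly inside the angular sector, which uses convexity of the geodesic triangle (valid under the perimeter hypothesis) and the exclusion of the collinear degenerate cases. Once the identity $a_t+b_t={\rm d}(y,\gamma(t))/\|\overrightarrow{\tilde y\tilde x}\|$ is in hand, Lemma \ref{cmpr1} closes the argument with no further computation.
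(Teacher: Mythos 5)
Your proof is correct and takes essentially the same route as the paper: invoke the axiom of plane (Proposition \ref{axiom}) to place everything in a $2$-dimensional totally geodesic submanifold, obtain the decomposition \eqref{cmpr3} there, and then use the comparison triangle of Lemma \ref{cmpr1} together with the collinearity of $\tilde x$ with $\tilde p,\tilde q$ to get $a_t+b_t={\rm d}(y,\gamma(t))/\|\overrightarrow{\tilde y\tilde x}\|$ and conclude via \eqref{distance0}. Your law-of-sines computation merely makes explicit the scalar $\lambda$ that the paper introduces; as a side note, your final inequality directions are the correct ones, whereas the paper's text transposes the bounds on $\lambda$ (a typo, since its own relation $a_t+b_t=\lambda$ forces $\lambda\ge1$ when $\kappa\ge0$).
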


\begin{proof}
Since the perimeter of the geodesic triangle $\triangle(ypq)$ is less than $2D_\kappa$, one can verify  that $\rho=\max\{\|\dot{\gamma}_{yp}(0)\|,\|\dot{\gamma}_{yq}(0)\|\}<D_\kappa$. Let $\rho<\bar\rho<D_\kappa$. Then, we get from Proposition \ref{axiom} that $N:=\exp_y\{{\rm span}\{\dot{\gamma}_{yp}(0),\dot{\gamma}_{yq}(0)\}\cap\IB(0_y,\bar\rho)\}$ is 2-dimensional totally geodesic sub-manifold of $M$. Hence   $\gamma\subset N$ thanks to assumption.  Thus, one has that
\begin{equation}\label{lc1}
\dot{\gamma}_{y\gamma(t)}(0)\in T_{y}N\subseteq{\rm span}\{\dot{\gamma}_{yp}(0),\dot{\gamma}_{yq}(0)\}\quad\mbox{for any } t\in[0,1].
\end{equation}
Thus, there exist some $a_t,b_t\in \IR$ such that \eqref{cmpr3} holds (see figure 4.1).

Below, we show that $a_t,b_t$ are positive and satisfy \eqref{plus=ab}. To this end,  as  in Lemma \ref{cmpr1} (see Figure 1), set $x=\gamma(t)$,  and let $\triangle(\tilde y\tilde p\tilde
q)$  be the corresponding  triangle of $\triangle(ypq)$ in $\IR^2$ satisfying \eqref{tri-comp} and $\tilde x$ be the corresponding point   in the interval $[\tilde p,\tilde q]$ satisfying \eqref{crro-pint}.
Without loss of generality, we may assume by \eqref{tri-comp} that $\overrightarrow{\tilde y\tilde p}=\dot{\gamma}_{yp}(0)$ and $\overrightarrow{\tilde y\tilde q}=\dot{\gamma}_{yq}(0)$.
Note, by \eqref{lc1}, that the vectors $\overrightarrow{\tilde y\tilde x}$ and $\dot{\gamma}_{yx}(0)$ are in the same $2$-dimensional Euclidean plane.
It follows from \eqref{crro-pint}, together with \eqref{cmpr3},  that
 there exists some $\lambda>0$ such that
\begin{equation}\label{cmpr32}
\lambda\overrightarrow{\tilde y\tilde x}=\dot{\gamma}_{yx}(0)=a_t\dot{\gamma}_{yp}(0)+b_t\dot{\gamma}_{yq}(0).
\end{equation}
Note that   $\tilde x$ lies actually  in the open interval $(\tilde p,\tilde
q)$ in $\IR^2$ (as
$0<t<1$ and so  $  \angle\tilde p\tilde y\tilde x>0,\;
 \angle\tilde q\tilde y\tilde x>0$  by \eqref{crro-pint}).
It follows from \eqref{cmpr32} that
\begin{equation}\label{cmpr33}
 a_t>0,\quad b_t>\quad \mbox{and}\quad \frac{a_t+b_t}\lambda=1.
\end{equation}
 Furthermore,
in view of \eqref{distance0}, we see that
$\lambda\le1$ if $ \kappa\ge0$   and $\lambda\ge1$ if $\kappa\le0$.
%
This, together with \eqref{cmpr33}, implies that \eqref{plus=ab} holds and the proof is complete.
\end{proof}




Now we are ready to verify the first theorem in the present section.

\begin{theorem} \label{levelset1} Suppose that the constant sectional curvature $\kappa>0$ and let $f_0$ be the function defined by \eqref{f2}. Then
the sub-level set   $L_{c,f_0}$ is strongly convex if and only if either  $c\le0$ or  $c\ge \frac{\|u_0\|D_\kappa}{2}$.
\end{theorem}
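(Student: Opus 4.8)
The plan is to reduce the strong convexity of $L_{c,f_0}$ to the behaviour of $f_0$ along minimal geodesics, exploiting the identity that Lemma~\ref{cmpr2} supplies. Fix $p,q\in\IB(x_0,\tfrac{D_\kappa}{2})=\mathrm{dom}f_0$ and let $\gamma=\gamma_{pq}$ be the unique minimal geodesic joining them; since the ball is strongly convex, $\gamma$ stays in the domain, and the triangle $\triangle(x_0pq)$ has perimeter $<2D_\kappa$ (its sides are $<\tfrac{D_\kappa}{2},\ <\tfrac{D_\kappa}{2}$ and $<D_\kappa$). Applying Lemma~\ref{cmpr2} with $y=x_0$ gives, for each $t\in(0,1)$, positive numbers $a_t,b_t$ with $a_t+b_t\ge1$ and $\dot\gamma_{x_0\gamma(t)}(0)=a_t\dot\gamma_{x_0p}(0)+b_t\dot\gamma_{x_0q}(0)$. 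Pairing with $u_0$ and using the definition \eqref{f2}, so that $f_0(\gamma(t))=\langle u_0,\dot\gamma_{x_0\gamma(t)}(0)\rangle$, bilinearity of $\langle\cdot,\cdot\rangle$ yields the central formula
\[
f_0(\gamma(t))=a_t\,f_0(p)+b_t\,f_0(q),\qquad a_t,b_t>0,\quad a_t+b_t\ge 1 .
\]
I would also record the elementary bound $f_0(x)\le\|u_0\|\,\|\dot\gamma_{x_0x}(0)\|=\|u_0\|\,\mathrm{d}(x_0,x)<\tfrac{\|u_0\|D_\kappa}{2}$, valid on the whole domain.

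For the sufficiency (``if'') part I split into the two ranges. If $c\ge\tfrac{\|u_0\|D_\kappa}{2}$, the displayed bound shows $f_0(x)<c$ for every $x\in\IB(x_0,\tfrac{D_\kappa}{2})$, hence $L_{c,f_0}=\IB(x_0,\tfrac{D_\kappa}{2})$, which is strongly convex by the property of balls recalled at the start of the section. If $c\le0$ and $p,q\in L_{c,f_0}$, then $f_0(p),f_0(q)\le c\le0$, so the central formula gives $f_0(\gamma(t))\le(a_t+b_t)c\le c$, the last step because $c\le0$ and $a_t+b_t\ge1$. Thus $\gamma\subseteq L_{c,f_0}$, and since the minimal geodesic is unique in the strongly convex ball, $L_{c,f_0}$ is strongly convex.

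For the necessity (``only if'') part I argue the contrapositive: assuming $0<c<\tfrac{\|u_0\|D_\kappa}{2}$ I build a violation of convexity. Working in $T_{x_0}M$, whose dimension is $\ge2$ since $\kappa>0$, pick a unit $w\perp u_0$ and set $v_p=\tfrac{c}{\|u_0\|^2}u_0+\varepsilon w$, $v_q=\tfrac{c}{\|u_0\|^2}u_0-\varepsilon w$ with $\varepsilon>0$ small enough that $\|v_p\|,\|v_q\|<\tfrac{D_\kappa}{2}$; this is possible precisely because $\tfrac{c}{\|u_0\|}<\tfrac{D_\kappa}{2}$. Then $v_p,v_q$ are linearly independent (their difference is $\perp u_0$ while their common $u_0$-component is nonzero), so $p=\exp_{x_0}v_p$, $q=\exp_{x_0}v_q$ satisfy $f_0(p)=f_0(q)=c$ and make $\triangle(x_0pq)$ non-degenerate. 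For the midpoint $m=\gamma_{pq}(\tfrac12)$ the central formula gives $f_0(m)=(a_{1/2}+b_{1/2})\,c$, and I claim $a_{1/2}+b_{1/2}>1$ strictly; granting this, $f_0(m)>c$ because $c>0$, so $m\notin L_{c,f_0}$ while $p,q\in L_{c,f_0}$. Hence the unique minimal geodesic joining $p$ to $q$ leaves $L_{c,f_0}$, so the set is not weakly convex and a fortiori not strongly convex.

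The crux, and the step I expect to be the main obstacle, is upgrading $a_{1/2}+b_{1/2}\ge1$ to the strict inequality for $\kappa>0$. Tracing the proof of Lemma~\ref{cmpr2} one has $a_t+b_t=\lambda$ with $\lambda=\mathrm{d}(x_0,\gamma(t))/\|\overrightarrow{\tilde y\tilde x}\|$, so strictness amounts to the strict hinge comparison $\mathrm{d}(x_0,\gamma(t))>\|\overrightarrow{\tilde y\tilde x}\|$. This is exactly the strict form of Lemma~\ref{cmpr1}, available for $\kappa>0$ since the law-of-cosines inequality in Proposition~\ref{compare} is already strict and the triangle is non-degenerate with $\tfrac12\in(0,1)$; I would make this strictness explicit in the comparison step. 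As a fully explicit fallback I could instead invoke the classification of space forms to identify $M$ with the round sphere $S^n(1/\sqrt{\kappa})$ and compute the midpoint $m$ directly, which confirms $f_0(m)>c$ by an elementary spherical computation.
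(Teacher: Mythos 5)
Your sufficiency argument is the same as the paper's (the bound $f_0(x)\le\|u_0\|\,{\rm d}(x_0,x)$ when $c\ge\frac{\|u_0\|D_\kappa}{2}$, and Lemma \ref{cmpr2} when $c\le 0$), so the substance lies in the necessity part, where you genuinely diverge from the paper. The paper never uses a strict form of Lemma \ref{cmpr1} or Lemma \ref{cmpr2}: it sets $z=\exp_{x_0}(cu_0)$, takes a geodesic $\tau$ through $z$ orthogonal to the axis $\gamma(t)=\exp_{x_0}(tu_0)$, applies the strict law of cosines of Proposition \ref{compare} to the right-angled hinge at $z$ to get $f_0(\tau(\pm\varepsilon))<c$ strictly, then moves $p_\varepsilon=\tau(\varepsilon)$ slightly farther from $x_0$ along $\gamma_{x_0p_\varepsilon}$ (staying in $L_{c,f_0}$ by continuity) and invokes the axiom of plane (Proposition \ref{axiom}) to force $\gamma_{pq}$ to cross the axis at some $\gamma(c_0)$ with $c_0>c$, a point outside $L_{c,f_0}$ since $f_0(\gamma(t))=t$. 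Your construction --- two points exactly on the level set $\{f_0=c\}$, symmetric about the axis, whose geodesic midpoint is pushed strictly above level $c$ --- is shorter and conceptually cleaner, but it stands or falls with the strict inequality $a_{1/2}+b_{1/2}>1$, which the paper's toolkit does not supply.

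That is the one soft spot, and your primary justification of it is a gap. Lemma \ref{cmpr1} is quoted from Afsari et al.\ without proof, so ``tracing the proof of Lemma \ref{cmpr2}'' cannot be done within this paper; and strictness does not follow directly from Proposition \ref{compare}: applying the strict law of cosines to the hinge at $p$ in $\triangle(x_0px)$ bounds ${\rm d}(x_0,x)$ from \emph{above}, the wrong direction, and passing from the strict law of cosines to a strict point-on-side comparison requires an Alexandrov-type gluing/rigidity argument you would have to write out. Fortunately your fallback is sound and closes the gap: by the classification of space forms, $M$ is isometric to the round sphere of radius $1/\sqrt{\kappa}$; with $\|u_0\|=1$, your symmetric choice of $p,q$ forces the midpoint $m$ onto the axis with $\gamma_{pq}$ meeting the axis orthogonally there, and spherical right-triangle trigonometry gives $\tan\bigl(\sqrt{\kappa}\,{\rm d}(x_0,m)\bigr)=\cos\phi_0\,\tan\bigl(\sqrt{\kappa}\,{\rm d}(x_0,p)\bigr)$, where $\phi_0$ is the angle at $x_0$; since $\tan$ is strictly convex on $(0,\pi/2)$ and vanishes at $0$, this yields ${\rm d}(x_0,m)>{\rm d}(x_0,p)\cos\phi_0=c$, i.e.\ $f_0(m)>c$. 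With that computation (or a rigidity argument) written out, your proof is complete and correct; what each route buys is clear: the paper stays entirely within its cited, non-strict comparison lemmas at the cost of an extra perturbation-and-intersection step, while yours isolates the failure of convexity in a single strict comparison at the level set.
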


\begin{proof}
We first show the sufficiency part. To do this, suppose that $c\le0$ or  $c\ge \frac{\|u_0\|D_\kappa}{2}$.
 Note that if $c\ge \frac{\|u_0\|D_\kappa}{2}$ then $L_{c,f_0}=\IB(x_0,\frac{D_\kappa}{2})$ is strongly convex because
 $$f_0(x)=\langle u_0,\dot{\gamma}_{x_0x}(0)\rangle\le\|u_0\|\cdot\|\dot{\gamma}_{x_0x}(0)\|\le\frac{\|u_0\|D_\kappa}{2}\le c.
$$
holds  for all $x\in \IB(x_0,\frac{D_\kappa}{2})$.
Thus, we need only to consider the case when $c\le0$. To proceed, fix $c\le0$ and let $p,q\in L_{c,f_0}$, that is, 
\begin{equation}\label{pqc}
\langle u_0,\dot{\gamma}_{x_0p}(0)\rangle\le c\quad\mbox{and}\quad \langle u_0,\dot{\gamma}_{x_0q}(0)\rangle\le c.
\end{equation}
Then  $p,q\in \IB(x_0,\frac{D_\kappa}{2})$ and  the geodesic triangle $\triangle(x_0pq)$ is well defined with perimeter less than $2D_\kappa$. 
Let $t\in [0,1]$. By assumption, Lemma
\ref{cmpr2} is applicable to concluding  that there exist two positive numbers $a_t$ and $b_t$ satisfying
with $a_t+b_t\ge1$ such that
$$\dot{\gamma}_{x_0\gamma(t)}(0)=a_t\dot{\gamma}_{x_0p}(0)+b_t\dot{\gamma}_{x_0q}(0),$$
where $\gamma:=\gamma_{pq}:[0,1]\rightarrow M$ is the unique minimal
 geodesic joining $p$ and $q$.
 It follows from  \eqref{f2} and \eqref{pqc} that
$$ f_0(\gamma(t)) =a_t\langle u_0,\dot{\gamma}_{x_0p}(0)\rangle+b_t\langle u_0,\dot{\gamma}_{x_0q}(0)\rangle
\le c(a_t+b_t)\le c $$
(note that $c<0$). This means that $\gamma_{p,q}(t)=\gamma(t)\in L_{c,f_0}$ for all $t\in [0,1]$, and so
$L_{c,f_0}$ is strongly convex as desired to show. The proof for the sufficiency part is complete.

To show the necessity part, without loss of generality, we may assume that
 $\|u_0\|=1$.  Let $0<c<\frac{D_\kappa}{2}$. It suffices to verify that  $L_{c,f_0}$ is not strongly convex, or equivalently, to construct two  points $p,\,q$ and a number $\bar t\in (0,1)$ such that
 \begin{equation}\label{non-in}
   p, q\in L_{c,f_0}\quad\mbox{and}\quad \bar z:=\gamma_{pq}(\bar t)\notin L_{c,f_0}.
 \end{equation}
 To do this,
consider the geodesic $\gamma:[0, \frac{D_\kappa}{2})\to M$ defined by $\gamma(t):=\exp_{x_0}tu_0$ for each  $t\in[0,\frac{D_\kappa}{2})$. Clearly it is contained in $\IB(x_0,\frac{D_\kappa}{2})$. Since $\IB(x_0,\frac{D_\kappa}{2})$ is strongly convex, we see that, for each $ t\in [0, \frac{D_\kappa}{2})$,
 the unique minimal geodesic joining $x_0$ and $\gamma(t)$ can be expressed as
 $$\gamma_{x_0\gamma(t)}(s)=\exp_{x_0}s(tu_0) \quad\mbox{for each } s\in [0,1].$$
 This in particular implies that, for each $ t\in [0, \frac{D_\kappa}{2})$, $\dot{\gamma}_{x_0\gamma(t)}(0)=tu_0$  and  so
\begin{equation}\label{nqsic1-v}
f_0(\gamma(t))=\langle u_0,\dot{\gamma}_{x_0\gamma(t)}(0)\rangle=\langle u_0,tu_0\rangle=t.
\end{equation}
Hence
 \begin{equation}\label{nqsic1}
\gamma(t) \in L_{c,f_0} \mbox{ for all }  t\in [0,c]\quad\mbox{and}\quad  \gamma(t)\not\in L_{c,f_0} \mbox{ for all } t\in (c,\frac{D_\kappa}{2})
\end{equation}
because
\begin{equation}\label{ball-z1}
{\rm d}({x_0},z)=c<\frac{D_\kappa}{2},
\end{equation}
by the choice of  $c$. In particular,
 $z:=\gamma(c)  \in L_{c,f_0}$.
Take $u\in T_zM$ such that $u\perp\dot{\gamma}(c)$. Then, by \eqref{ball-z1}, there exists some $\varepsilon>0$
such that the geodesic
$\tau:[-\varepsilon,\varepsilon]\rightarrow M$, determined by $\tau(0)=z$ and  $\dot{\tau}(0)=u$, is contained in
$\IB(x_0,\frac{D_\kappa}{2})\cap\IB(z,\frac{D_\kappa}{2})$. Set $p_{\varepsilon}:=\tau(\varepsilon)$ and  $q_{\varepsilon}:=\tau(-\varepsilon)$ (see, Figure 4.2). Then
\begin{equation}\label{ball-z2}
p_{\varepsilon},\;q_{\varepsilon}\in\IB(x_0,\frac{D_\kappa}{2})\cap\IB(z,\frac{D_\kappa}{2}).
\end{equation}
 Below, we shall show that
\begin{equation}\label{nc11}
p_\varepsilon,\;q_\varepsilon\in L_{c,f_0}\mbox{ with $f_0(p_\varepsilon)<c$ and $f_0(q_\varepsilon)<c$}.
\end{equation}
Consider the geodesic triangle
$\triangle(x_0zp_\varepsilon)$. Then its perimeter is less than $2D_\kappa$ thanks to \eqref{ball-z1} and  \eqref{ball-z2}. Thus Proposition \ref{compare} is applicable, and using \eqref{cop>0}, we have that
$${\rm d}^2(x_0,p_\varepsilon)<{\rm d}^2(x_0,z)+{\rm d}^2(z,p_\varepsilon)-2{\rm d}(x_0,z){\rm d}(z,p_\varepsilon)\cos \angle p_\varepsilon zx_0={\rm d}^2(x_0,z)+{\rm d}^2(z,p_\varepsilon), $$
(noting that $\angle p_\varepsilon zx_0=\frac\pi2$ as $\dot{\tau}(0)\perp\dot{\gamma}(c)$), and
$${\rm d}^2(z,p_\varepsilon)<{\rm d}^2(x_0,z)+{\rm d}^2(x_0,p_\varepsilon)-2{\rm d}(x_0,z){\rm d}(x_0,p_\varepsilon)\cos\angle
p_\varepsilon x_0z.$$
Combining these two inequalities, we get that
$${\rm d}(x_0,p_\varepsilon)\cos\angle
p_\varepsilon x_0z<{\rm d}(x_0,z).$$
Thus
$$f_0(p_\varepsilon)={\rm d}(x_0,p_\varepsilon)\cdot\|u_0\|\cdot\cos\angle p_\varepsilon x_0z={\rm d}(x_0,p_\varepsilon)\cos\angle p_\varepsilon x_0z<{\rm d}(x_0,z)=c,$$
where the last equality holds because of \eqref{ball-z1}.    Similarly,   we have $f_0(q_\varepsilon)<c$ and  \eqref{nc11} is shown.

Let $\gamma_{x_0}:[0,\infty)\rightarrow M$ be the geodesic satisfying that
$\gamma_{x_0}(0)=x_0$ and $\gamma_{x_0}(1)=p_\varepsilon$. In light of \eqref{ball-z2} and \eqref{nc11}, we get by the continuity of $f_0$ that
there exists $t_0>1$ such that $\gamma_{x_0}(t_0)\in L_{c,f_0}$. Set
$p :=\gamma_{x_0}(t_0)$ and $q:=q_{\varepsilon}$. Then,  $p,\; q\in L_{c,f_0}$
(see \eqref{nc11}). We further show that
 \begin{equation}\label{nc12}
  \bar z:=\gamma_{pq}(\bar t)\notin L_{c,f_0} \;\mbox{ for some }\bar t\in (0,1).
\end{equation}
Granting this, \eqref{non-in} is established. To show \eqref{nc12}, write
 $\widetilde{N}:=\exp_z\{{\rm span}\{\dot{\gamma}_{zx_0}(0),u\}\cap\IB(0_z,\frac{D_\kappa}{2})\}$. Then $\widetilde{N}$ is a 2-dimensional totally geodesic sub-manifold of $M$ by Proposition \ref{axiom} (recalling that $M$ is of constant curvature). Since points $x_0,p,q,p_\varepsilon,z$ lie in $\widetilde{N}$, it follows that  $\gamma_{pq}$ must meet $\gamma $ at some point $\bar z:=\gamma_{pq}(\bar t)=\gamma(c_0)$ with $\bar t\in (0,1)$ and $c_0>c$ (see Figure 4.2). In light of \eqref{nqsic1}, one sees  that $\bar z\not\in
L_{c,f_0}$. Thus \eqref{nc12} is shown,  and the proof is complete.
\end{proof}

\begin{figure}{}
\centering
\subfigure{\includegraphics[width=0.5\textwidth]{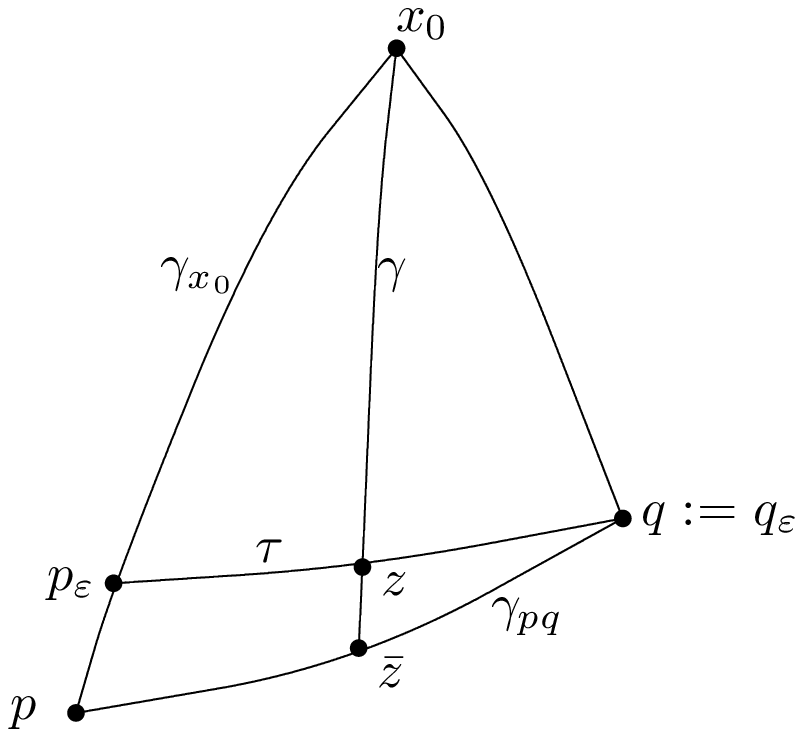}}
\caption*{Figure 4.2}
\end{figure}

Our second theorem in this section is  Theorem \ref{levelset2} below, which is an analogue of  Theorem \ref{levelset1} on Hadamard manifold of constant
sectional curvature.  In particular,  Theorem \ref{levelset2} improves and extends  the corresponding result in \cite[Corollary 3.1]{Ferreira2005}, where it was shown that  the sub-level sets $L_{c,f_0}$ is convex  in the special case when $c=0$. The proof of Theorem \ref{levelset2} is quite similar to  that we did for  Theorem \ref{levelset1} and so  we omit it here.

\begin{theorem} \label{levelset2}
Suppose that the constant sectional curvature $\kappa<0$ and let $f_0$ be the function defined  by \eqref{f3}.
Then,  $L_{c,f_0}$ is convex  if and only if $c\ge0$.
\end{theorem}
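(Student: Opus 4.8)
The plan is to mirror the proof of Theorem~\ref{levelset1}, splitting into sufficiency and necessity, but with the signs in the comparison estimates reversed because now $\kappa<0$. Throughout I use that $\kappa<0$ forces $D_\kappa=+\infty$, so $M$ is a Hadamard manifold, ${\rm dom}\,f_0=\IB(x_0,\tfrac{D_\kappa}{2})=M$, and all the convexity notions of Definition~\ref{convexset} coincide; thus ``$L_{c,f_0}$ is convex'' means that for every $p,q\in L_{c,f_0}$ the unique minimal geodesic $\gamma_{pq}$ lies in $L_{c,f_0}$. For the sufficiency, suppose $c\ge0$ and take $p,q\in L_{c,f_0}$, i.e. $\langle u_0,\dot\gamma_{x_0p}(0)\rangle\le c$ and $\langle u_0,\dot\gamma_{x_0q}(0)\rangle\le c$. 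Applying Lemma~\ref{cmpr2} to $\triangle(x_0pq)$ (whose perimeter is trivially $<2D_\kappa=+\infty$), for each $t\in(0,1)$ I obtain positive numbers $a_t,b_t$ with $a_t+b_t\le1$ (the $\kappa\le0$ branch of \eqref{plus=ab}) such that $\dot\gamma_{x_0\gamma(t)}(0)=a_t\dot\gamma_{x_0p}(0)+b_t\dot\gamma_{x_0q}(0)$, where $\gamma=\gamma_{pq}$. Then $f_0(\gamma(t))=a_t\langle u_0,\dot\gamma_{x_0p}(0)\rangle+b_t\langle u_0,\dot\gamma_{x_0q}(0)\rangle\le c(a_t+b_t)\le c$, the last step using $c\ge0$ together with $a_t+b_t\le1$. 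Hence $\gamma_{pq}\subseteq L_{c,f_0}$, so $L_{c,f_0}$ is convex.

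For the necessity I argue by contraposition: assuming $c<0$, I construct two points of $L_{c,f_0}$ whose connecting geodesic leaves $L_{c,f_0}$, in the spirit of Example~\ref{example}. Normalize $\|u_0\|=1$ and set $\gamma(t):=\exp_{x_0}tu_0$, so $f_0(\gamma(t))=t$. Fix any $c_0\in(c,0)$ (say $c_0=c/2$) and put $w:=\gamma(c_0)$, so that $f_0(w)=c_0>c$ and $w\notin L_{c,f_0}$; note $w$ lies on the $-u_0$ side of $x_0$ since $c_0<0$. Let $\sigma$ be the geodesic with $\sigma(0)=w$ and $\dot\sigma(0)\perp\dot\gamma(c_0)$, let $\widetilde N:=\exp_w\big({\rm span}\{\dot\gamma(c_0),\dot\sigma(0)\}\big)$ be the $2$-dimensional totally geodesic plane through $x_0,\gamma,\sigma$ (Proposition~\ref{axiom}), and consider the offset points $\sigma(\pm\delta)$. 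Because $\sigma$ is a geodesic running through $\sigma(\delta),\,w=\sigma(0),\,\sigma(-\delta)$ and minimal geodesics in $M$ are unique, the minimal geodesic joining $\sigma(\delta)$ to $\sigma(-\delta)$ is $\sigma$ itself, with midpoint exactly $w$. Moreover the reflection of $\widetilde N$ about $\gamma$ is an isometry fixing $x_0$ and $u_0$ and sending $\sigma(\delta)\mapsto\sigma(-\delta)$, whence $f_0(\sigma(\delta))=f_0(\sigma(-\delta))$.

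It remains to choose $\delta$ with $f_0(\sigma(\pm\delta))\le c$, and this is the main obstacle, where negative curvature is essential. Exactly as in Theorem~\ref{levelset1}, I apply Proposition~\ref{compare} to the right triangle $\triangle(x_0w\sigma(\delta))$ (right angle at $w$), now using the $\kappa<0$ inequality; combining the two resulting estimates yields ${\rm d}(x_0,\sigma(\delta))\cos\angle\sigma(\delta)x_0w>{\rm d}(x_0,w)$, and since $w$ is on the $-u_0$ side this gives $f_0(\sigma(\delta))=-{\rm d}(x_0,\sigma(\delta))\cos\angle\sigma(\delta)x_0w<-{\rm d}(x_0,w)=c_0$. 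This shows each offset strictly lowers $f_0$, but the delicate point is to push $f_0(\sigma(\delta))$ below $c$. Here I use that $\widetilde N$ is a $2$-dimensional space form of curvature $\kappa$, so $\triangle(x_0w\sigma(\delta))$ obeys exact hyperbolic trigonometry: with fixed leg ${\rm d}(x_0,w)=-c_0$ and growing leg ${\rm d}(w,\sigma(\delta))=\delta$, the identity $\cos\angle\sigma(\delta)x_0w=\tanh(\sqrt{-\kappa}\,(-c_0))/\tanh(\sqrt{-\kappa}\,{\rm d}(x_0,\sigma(\delta)))$ forces ${\rm d}(x_0,\sigma(\delta))\cos\angle\sigma(\delta)x_0w\to+\infty$ as $\delta\to\infty$, i.e. $f_0(\sigma(\delta))\to-\infty$. (In the Euclidean case $f_0$ would be constant along $\sigma$, which is exactly why $L_{c,f_0}$ stays convex there, so the unbounded decay isolates the role of constant negative curvature.)

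Choosing $\delta$ large enough that $f_0(\sigma(\pm\delta))<c$ and setting $p:=\sigma(\delta)$, $q:=\sigma(-\delta)$, I get $p,q\in L_{c,f_0}$ while the midpoint $w=\gamma_{pq}(\tfrac12)$ satisfies $f_0(w)=c_0>c$, so $w\notin L_{c,f_0}$; hence $L_{c,f_0}$ is not convex, completing the contrapositive and the proof. To summarize the difficulty: the sufficiency is a one-line consequence of Lemma~\ref{cmpr2} with the favorable sign $c\ge0$, whereas the necessity rests entirely on the limit $f_0(\sigma(\delta))\to-\infty$, which concretely generalizes the numerical counterexample of Example~\ref{example} and is where constant negative curvature does the work.
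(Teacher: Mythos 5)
Your proof is correct, and it splits cleanly into a half that is exactly the mirrored argument the paper intends and a half that genuinely departs from it. (The paper omits its own proof of Theorem~\ref{levelset2}, saying only that it is similar to that of Theorem~\ref{levelset1}, so the mirror is the only basis for comparison.) Your sufficiency part is precisely that mirror: Lemma~\ref{cmpr2} with the $\kappa\le0$ branch of \eqref{plus=ab} gives positive $a_t,b_t$ with $a_t+b_t\le1$, and the sign $c\ge0$ yields $f_0(\gamma(t))\le c(a_t+b_t)\le c$. Your necessity part, however, is not a routine mirror, and rightly so: the literal transcription of the paper's construction --- offsets $p_\varepsilon,q_\varepsilon$ perpendicular to $\gamma$ at $z=\gamma(c)$, extension of one offset along its ray from $x_0$, and the claim that the chord to the other offset meets the axis at $\gamma(c_0)$ with $c_0>c$ --- breaks down when $\kappa<0$, because the crossing point then lands \emph{inside} $L_{c,f_0}$ rather than outside (already in the Euclidean limit it sits at parameter $\tfrac{2t_0}{1+t_0}c<c$ since $c<0$, and a half-plane-model computation shows the same for $\kappa=-1$), so no violation of convexity is produced. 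Your replacement --- basing the perpendicular geodesic $\sigma$ at $w=\gamma(c_0)$ with $c<c_0<0$, hence $w\notin L_{c,f_0}$, and proving $f_0(\sigma(\delta))\to-\infty$ via the exact space-form identity $\cos\angle\sigma(\delta)x_0w=\tanh\bigl(\sqrt{-\kappa}\,(-c_0)\bigr)/\tanh\bigl(\sqrt{-\kappa}\,{\rm d}(x_0,\sigma(\delta))\bigr)$ in the totally geodesic plane of Proposition~\ref{axiom} --- is sound in every step I can check: the sign bookkeeping $f_0(\sigma(\delta))=-{\rm d}(x_0,\sigma(\delta))\cos\angle\sigma(\delta)x_0w$ (since $w$ is on the $-u_0$ side), the symmetry $f_0(\sigma(\delta))=f_0(\sigma(-\delta))$, the uniqueness of the chord $\sigma$ in a Hadamard manifold, and the limit as $\delta\to\infty$. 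What your route buys is an explicit global witness of non-convexity and a clear identification of the mechanism (unbounded decay of $f_0$ along geodesics perpendicular to the axis, which vanishes in the flat case and explains why Euclidean sub-level sets stay convex); what it costs is reliance on exact hyperbolic trigonometry rather than only the comparison inequalities of Proposition~\ref{compare}, which is harmless here because the curvature is constant.
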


As a direct consequence of Theorems \ref{levelset1} and \ref{levelset2}, together with  Proposition \ref{remark}, we have the following corollary which shows that the function defined by \eqref{f2} is not quasi-convex in general.

\begin{corollary}\label{corollary}
Suppose that  $M$ is of non-zero constant sectional
curvature. Let $x_0\in M$ and $u_0\in T_{x_0}M\setminus \{0\}$. Then, the  functions defined by \eqref{f2} is not quasi-convex.
\end{corollary}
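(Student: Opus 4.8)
The plan is to derive Corollary~\ref{corollary} as an immediate consequence of the two preceding theorems combined with Proposition~\ref{remark}. The statement hypothesizes that $M$ has non-zero constant sectional curvature $\kappa$, so I split into the two cases $\kappa>0$ and $\kappa<0$, invoking Theorem~\ref{levelset1} and Theorem~\ref{levelset2} respectively. In each case the goal is to exhibit a single value of $c$ for which the sub-level set $L_{c,f_0}$ fails the convexity condition that Proposition~\ref{remark} equates to quasi-convexity; since $\mathrm{dom}\,f_0=\IB(x_0,\frac{D_\kappa}{2})$ is strongly convex (as noted after \eqref{f2}), the relevant clause of Proposition~\ref{remark} says $f_0$ is quasi-convex if and only if $L_{c,f_0}$ is strongly convex for every $c\in\IR$.

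First, consider $\kappa>0$. By Theorem~\ref{levelset1}, $L_{c,f_0}$ is strongly convex precisely when $c\le0$ or $c\ge\frac{\|u_0\|D_\kappa}{2}$. Since $u_0\neq0$ we have $\frac{\|u_0\|D_\kappa}{2}>0$, so any $c$ in the nonempty open interval $\bigl(0,\frac{\|u_0\|D_\kappa}{2}\bigr)$ yields a sub-level set that is \emph{not} strongly convex. Fixing such a $c$ and applying Proposition~\ref{remark}, I conclude that $f_0$ is not quasi-convex. Next, for $\kappa<0$, Theorem~\ref{levelset2} asserts that $L_{c,f_0}$ is convex if and only if $c\ge0$; hence for any $c<0$ the set $L_{c,f_0}$ is not convex, and Proposition~\ref{remark} again forces the failure of quasi-convexity.

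Combining the two cases covers every manifold of non-zero constant curvature and completes the argument. There is essentially no genuine obstacle here: the corollary is a straightforward logical assembly, and the only point requiring a line of care is confirming that the excluded interval of $c$-values is nonempty. For $\kappa>0$ this follows from $\|u_0\|>0$ and $D_\kappa=\frac{\pi}{\sqrt{\kappa}}>0$; for $\kappa<0$ the interval $(-\infty,0)$ is trivially nonempty. I would also remark explicitly that the strong convexity of $\mathrm{dom}\,f_0$ is what licenses the sharper ``strongly convex for each $c$'' formulation of quasi-convexity in Proposition~\ref{remark}, so that a single offending value of $c$ genuinely suffices to refute quasi-convexity rather than merely the total-convexity-restricted-to-domain version.
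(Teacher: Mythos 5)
Your proof is correct and follows essentially the same route as the paper, which states the corollary as a direct consequence of Theorems \ref{levelset1} and \ref{levelset2} together with Proposition \ref{remark}; your write-up simply makes explicit the case split on the sign of $\kappa$, the choice of an offending level $c$ (any $c\in(0,\frac{\|u_0\|D_\kappa}{2})$ for $\kappa>0$, any $c<0$ for $\kappa<0$), and the role of the strong convexity of ${\rm dom}f_0$ in applying Proposition \ref{remark} — details the paper leaves implicit.
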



\end{CJK*}

\end{document}